\documentclass[a4paper,12pt]{article}

\usepackage[left=2.4cm,right=2.4cm,top=2.5cm,bottom=2.8cm, marginparsep=-1mm]{geometry}

\usepackage{times,mathrsfs}
\usepackage{cmbright}
\usepackage{microtype}
\usepackage{lmodern}
\usepackage[T1]{fontenc}
\usepackage[utf8]{inputenc}
\usepackage[english]{babel}

\usepackage{graphicx} 
\usepackage{verbatim} 
\usepackage{amsmath,amssymb,amsthm}
\usepackage{color}
\usepackage{enumerate}
\usepackage{array}
\usepackage[noadjust]{cite}
\usepackage{multirow}

\usepackage{caption}
\usepackage{subfig}
\usepackage{wrapfig}

\usepackage{tikz}
\usetikzlibrary{calc}
\usetikzlibrary{arrows}
\usetikzlibrary{decorations.pathreplacing}  

\usepackage[pdftex]{hyperref}
\definecolor{darkgreen}{rgb}{0,0.4,0}
\definecolor{BrickRed}{rgb}{0.65,0.08,0}
\hypersetup{colorlinks=true,linkcolor=blue,citecolor=red,filecolor=BrickRed,urlcolor=darkgreen}

\newcommand{\LandauO}{\mathcal{O}}

\newcommand{\PR}{\mathbb{P}} 
\newcommand{\E}{\mathbb{E}} 
\newcommand{\V}{\mathbb{V}}

\newcommand{\Rc}{\mathcal{R}}
\newcommand{\Sc}{\mathcal{S}}
\newcommand{\Tc}{\mathcal{T}}

\newcommand{\N}{\mathbb{N}}

\newtheorem{theorem}{Theorem}[section]
\newtheorem{lemma}[theorem]{Lemma}
\newtheorem{proposition}[theorem]{Proposition}
\newtheorem{corollary}[theorem]{Corollary}
\newtheorem{definition}[theorem]{Definition}
\theoremstyle{remark}
\newtheorem{remark}[theorem]{Remark}

\newtheoremstyle{conjecture}{}{}{\it}{}{\color{purple}\bfseries}{}{ }{}
\theoremstyle{conjecture}

\newcommand{\OEIS}[1]{\href{http://oeis.org/#1}{OEIS~#1}}
\newcommand{\OEISs}[1]{\href{http://oeis.org/#1}{#1}}

\newcommand{\DD}{D}
\newcommand{\CC}{\mathcal{C}}

\newcommand{\Ai}{\text{\normalfont Ai}}

\begin{document}

\title{\textbf{The decompressed tree size of $k$-ary chains}}
\date{}

\author{Michael Wallner\thanks{Institute of Discrete Mathematics, TU Graz, Austria \emph{and} Institute of Discrete Mathematics and Geometry, TU Wien, Austria }} 

\maketitle

\begin{abstract}
	A chain is defined as a directed acyclic graph (DAG) with one source and one sink, where the children are ordered and the spanning tree computed using a depth-first search is a path. 
Such DAGs emerge in the context of tree compression and are therefore uniquely associated with a tree. 
The tree size of a DAG is defined as the size of the associated tree. 
For fixed out-degree $k \geq 2$, we compute the asymptotic expected decompressed tree size of a chain of size $n$ chosen uniformly at random, and we show that it contains a stretched exponential term of the form $e^{c \, \sqrt{n}}$. 
This result also has implications for the limit distribution of Brauer chains of fixed length.

\medskip

\noindent\textbf{Keywords: } Directed acyclic graphs, addition chains, compression algorithms, generating functions, recurrence relations, asymptotic enumeration, stretched exponential.
\end{abstract}

\section{Introduction}
\label{sec:intro}

Trees are a fundamental data structure in computer science.
To save memory, many effective applications store them as directed acyclic graphs (DAGs) such that repeated occurrences of the same subtrees are only stored once; see Figure~\ref{fig:compression}. 
This process is known as \emph{sharing} or \emph{DAG compression} and is used in 
XML documents~\cite{bousquet2015xml},
binary decision diagrams~\cite{MeinelTheobald1998Algorithms},
compilers~\cite{DowneySethiTarjan1980variations}, 
and various programming languages~\cite{Aho1986compilers}.
This gives a bijection between a tree class and a DAG class.
The compression is very efficient and can be implemented in worst-case time $O(n)$ 
~\cite{DowneySethiTarjan1980variations}.
The gain in memory has been analyzed previously~\cite{bousquet2015xml,flss90}: 
A uniformly chosen simply generated tree (e.g., a binary tree) of size~$n$ has on average a compressed size (i.e., the size of the DAG) that is asymptotically equivalent to
\[
	\frac{C' \, n}{\sqrt{\log n}},
\]
with a model-dependent constant $C'$.

So far, nothing was known about the reverse question: What is the average \emph{decompressed size} of a compressed tree chosen uniformly at random among compressed trees of the same size?
In our main result we answer this question for \emph{chains}, which are the simplest non-trivial class of compressed trees defined in Section~\ref{sec:chains}.
In Theorem~\ref{theo:decompressed}, we prove that the expected decompressed size of chains of size $n$ chosen uniformly at random is asymptotically equivalent to 
\[
	\frac{C \, e^{c \sqrt{n}}}{n^{1/4}}
\]
for explicit constants $C, c >0$.
The main difficulty is that it is much more difficult to enumerate DAGs than trees.

\begin{figure}[t]
	\centering
	\includegraphics[width=1\textwidth]{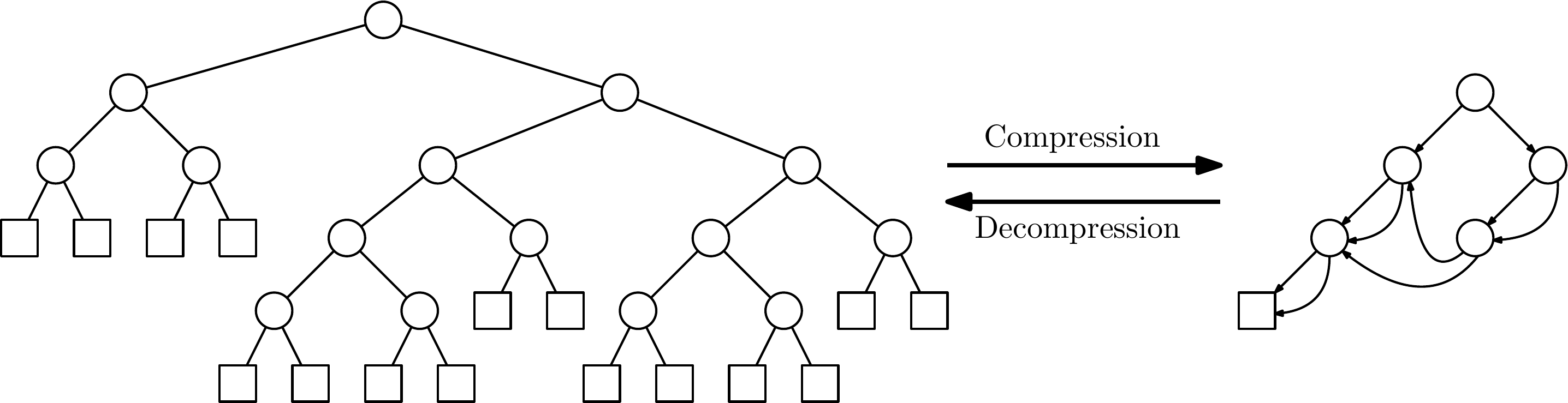}
	\caption{DAG compression: A binary tree $T$ of size $15$ on the left and its compressed representation as DAG $G$ of compressed size $5$ on the right, in which repeated occurrences of the same subtrees are factored. Thus, the \emph{decompressed tree size} of $G$ is $|G|_{\Tc} = 15$; see Definitions~\ref{def:pointersetc}, \ref{def:decompression}, and \ref{def:decompressedtreesize}. }
	\label{fig:compression}
\end{figure}

In~\cite{GenitriniGittenbergerKauersWallner2016} we considered the reverse question of enumerating all compressed\footnote{Previously, the term \emph{compacted tree} was used.} binary trees of compressed size $n$. 
We showed that after bounding one parameter related to the height, the associated generating functions are D-finite\footnote{A function is \emph{D-finite} if it satisfies a linear differential equation with polynomial coefficients.}. 
Later, in~\cite{ElveyPriceFangWallner2019Compacted} we solved the counting problem of unconstrained compressed binary trees using an entirely different approach on bivariate recurrence relations. 
We showed that their number grows like 
\[
	\Theta\left( n! \, 4^n\, e^{3a_1 n^{1/3}} \, n^{3/4} \right)
\]
for $n \to \infty$, where $a_1 \approx -2.331$ is the largest root of the Airy function $\Ai(x)$ of the first kind\footnote{The Airy function $\Ai(x)$ of the first kind is characterized by $\Ai''(x) = x \Ai(x)$ and $\lim_{x \to \infty} \Ai(x) = 0$.}. 
Such a stretched exponential term did not appear in the bounded case, proving a phase transition between the bounded and unbounded model. 
Similar phenomena have recently also been shown in the number of relaxed $k$-ary trees~\cite{GhoshDastidarWallner2024Relaxed},
minimal deterministic finite automata accepting a finite binary language~\cite{ElveyPriceFangWallner2020DFA},
Young tableaux with walls~\cite{BanderierWallner2021Walls},
and phylogenetic tree-child networks~\cite{Changetal2024dcombining,Fuchs2021TreeChild}.
\subsection{Main result}
In this paper we refine the enumeration of compressed trees and, for the first time, analyze the distribution of the decompressed sizes of compressed trees of size $n$ chosen uniformly at random.
Our main result is Theorem~\ref{theo:decompressed}, which gives the asymptotic expected decompressed size for the simplest non-trivial class of compressed trees, so-called chains defined in Section~\ref{sec:chains}.
In it we show that another stretched exponential appears for chains of arbitrary fixed out-degree $k \geq 2$. 

\subsection{Plan of the paper}
In Section~\ref{sec:decompressedtreesize} we introduce the decompression algorithm and the key concept of this paper: the decompressed tree size, which lifts a tree statistic to a DAG statistic.
In Section~\ref{sec:chains} we define the DAG class of chains and state our main result about their decompression.
Then, we introduce in Section~\ref{sec:cgf} $d$-exponential generating functions, which are a natural generalization of exponential generating functions, and use them in Section~\ref{sec:rightheight} to prove our main result. 
In Section~\ref{sec:combinatorics} we discuss and study the rich combinatorics of the binary case with links to Laguerre polynomials and permutations, explaining the remarkably simple closed forms.
In Section~\ref{sec:additionchains} we discuss connections with addition chains. 
Finally, in Section~\ref{sec:conclusion} we present several further research directions.

\section{The decompressed tree size of rooted DAGs}
\label{sec:decompressedtreesize}

\begin{figure}[t]
	\centering
	\includegraphics[width=1\textwidth]{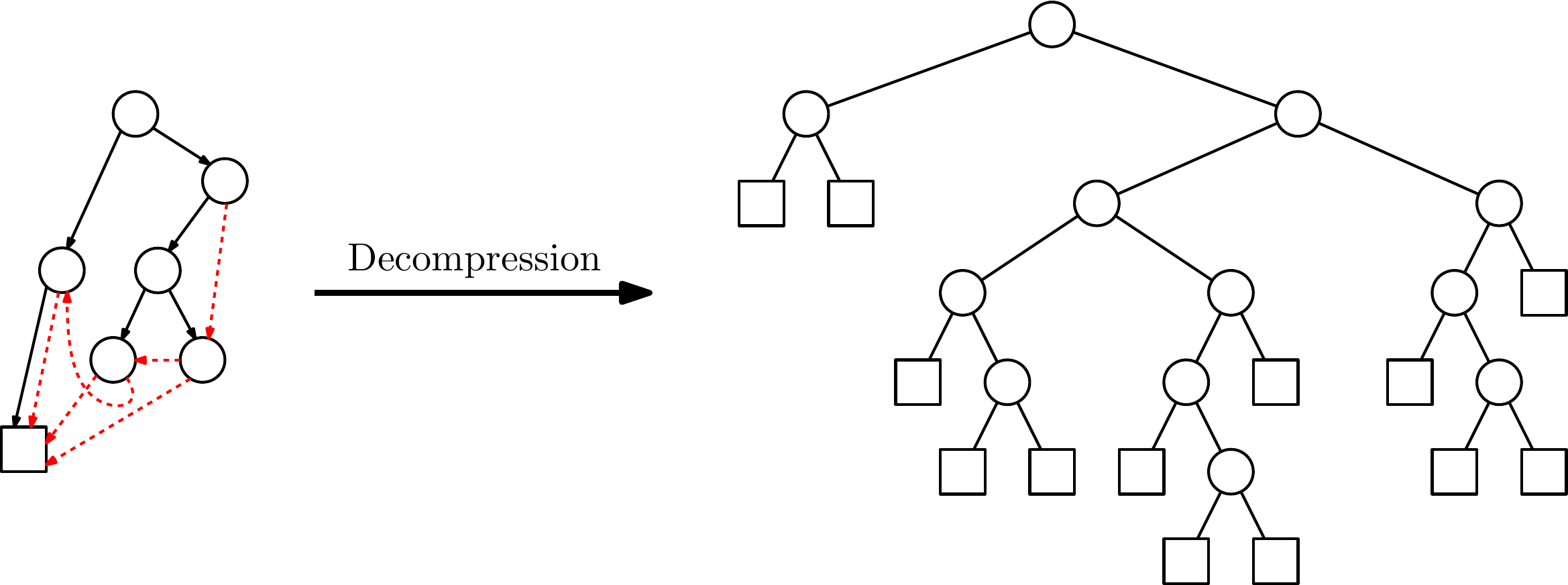}
	\caption{A rooted ordered binary DAG $G$ with 6 internal nodes (left) and its associated decompressed binary tree $D(G)$ with 13 internal nodes (right). 
	In ordered DAGs the outgoing edges have a left-to-right order. 
	In $G$ the internal edges are shown as solid black lines and  pointers as dashed red lines. Therefore, the decompressed tree size of $G$ is $|G|_{\Tc} = 13$; see Definitions~\ref{def:pointersetc}, \ref{def:decompression}, and \ref{def:decompressedtreesize}.}
	\label{fig:decompression}
\end{figure}

The motivation for this work is to understand the compression of rooted trees. 
Note that after compression, the root becomes the unique source of the DAG.
For this purpose we consider only such DAGs. 
To stress the connection with trees, we henceforth call DAGs with a unique source \emph{rooted DAGs}.
In the language of graph theory, this is equivalent to the existence of a unique subgraph that is an arborescence.
Note that all concepts can be directly generalized to DAGs with multiple roots, at the cost of losing the connection to trees. 

The key concept of this paper is the decompressed 
size of a rooted DAG.
For this purpose we now define the decompression algorithm, which associates with each rooted DAG a rooted tree. 
We start with a few definitions.

\begin{definition}[Parameters of rooted DAGs]
	\label{def:pointersetc}
	The \emph{spine} of a rooted DAG $G$ is its unique spanning tree computed in a depth-first search. 
	The edges of the spine are referred to as \emph{internal edges} and the other edges of $G$ as \emph{external edges} or \emph{pointers}.
	The \emph{fringe subgraph} of a node $v \in G$ is the induced subgraph of all nodes that can be reached from $v$. 
\end{definition}

Now we are ready to define the {decompression operator $\DD$} that assigns to each rooted DAG a rooted tree that we call its decompressed tree.

\begin{definition}[Decompressed tree]
\label{def:decompression}
Let $G$ be a rooted DAG. Its \emph{decompressed tree $\DD(G)$} is defined as follows:
\begin{enumerate}
	\item Replace the sinks by leaves, i.e., nodes with out-degree $0$.
	\item Traverse the DAG along internal edges in postorder. 
	\item Iteratively, once all children of a node~$v$ have been processed, replace all pointers emanating from $v$ by edges to copies of the fringe subtrees they are pointing to. 
\end{enumerate}
\end{definition}

\begin{remark}
Note that different DAGs $G_1 \neq G_2$ might have the same decompressed trees, i.e., $D(G_1)=D(G_2)$.
However, for each tree $T$, there is a unique rooted DAG $G$ with minimal number of nodes, such that $D(G)=T$.
In the minimal DAG all fringe subgraphs are unique.
We will not explore this uniqueness, since we are here interested in the decompressed properties of DAG classes, rather than the compression of tree classes. 
For more on the topic of uniqueness in trees and DAGs, we refer to~\cite{flss90,GenitriniGittenbergerKauersWallner2016,ElveyPriceFangWallner2019Compacted,bousquet2015xml,RalaivaosaonaWagner2015Repeated}.
\end{remark}

The decompression operator allows us now to introduce the new key statistic of this paper for DAGs: the decompressed tree size; see Figure~\ref{fig:decompression}.

\begin{definition}[Decompressed tree size]
\label{def:decompressedtreesize}
	For a given size function $|\cdot| : \Tc \mapsto \N$ on trees $\Tc$,
	the \emph{decompressed tree size} $|G|_{\Tc}$ of a DAG $G$ is defined as 
	\[|G|_{\Tc} := |\DD(G)|.\]
\end{definition}

The type of the decompressed tree depends on the chosen family of DAGs. 
In this paper we will study rooted ordered (also called plane) DAGs, in which the children of each node have a distinguished order and we will use as our size function $|\cdot|$ the number of internal nodes.
Alternatively, the total number of nodes, or the number of leaves are also possible, yet these are less natural for the subsequent results and lead to more complicated formulas.

\begin{remark}[Decompressed tree statistics for DAGs]
	Note that the decompressed tree size is just one example of a tree statistic that can be used to define a DAG statistic using the decompression operator.
	Other interesting choices are for example the height or the width of the associated tree.
	This is particularly interesting, as trees are well-studied~\cite{drmo09} and very closely related to probabilistic objects such as Brownian excursions~\cite{jans07}.
	The decompression framework allows lifting all these concepts into the world of DAGs.
\end{remark}

\section{The expected decompressed size of chains}
\label{sec:chains}

We will now introduce the family of DAGs that we consider: $k$-ary chains.
Let $k$ be a positive integer.
We define \emph{$k$-ary DAGs} as rooted ordered DAGs in which each node has exactly $k$ outgoing edges.
The corresponding decompressed trees are classical rooted ordered $k$-ary trees, enumerated by the Fuss--Catalan numbers $\frac{1}{(k-1)n+1} \binom{kn}{n}$.
The most famous subclass are binary trees enumerated by the ubiquitous Catalan numbers $\frac{1}{n+1} \binom{2n}{n}$; see, e.g.,~\cite{stanley2015catalan} for links to hundreds of combinatorial objects.

In addition to the out-degrees, one may also fix the allowed shapes.
We consider fixed shapes of the corresponding spines. 
Such families were (asymptotically) enumerated in~\cite{GenitriniGittenbergerKauersWallner2016} for binary DAGs with spines of fixed right height.
The simplest non-trivial example are \emph{chains} that are DAGs conditioned to have spines of right height $0$, i.e., the spine is a sequence of nodes; see Figure~\ref{fig:binarychain}. 
All fringe subgraphs of these DAGs are unique and thus all its decompressed trees are distinct.

\begin{figure}[h!]
	\centering
	\includegraphics[width=1\textwidth]{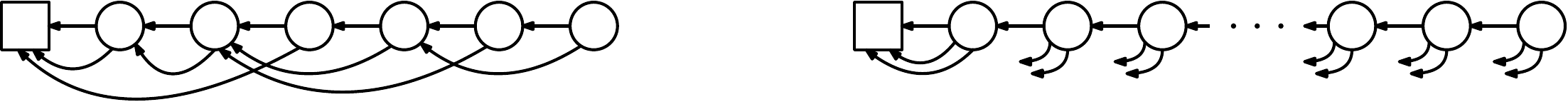}
	\caption{Left: A binary chain (i.e., $k=2$) with $6$ internal nodes. The unique root is on the very right and the unique sink on the very left.
	Right: The shape of ternary chains. 
	}
	\label{fig:binarychain}
\end{figure}

Let us briefly describe, why this class is interesting.
First, since pointers can point to any node to the left, the number of $k$-ary chains with $n$ internal nodes is $(n!)^{k-1}$.
Hence, the cardinality of this subclass grows super-exponentially in the number~$n$ of internal nodes. 
Second, the study of this subclass was the key in all previous papers to derive asymptotic results.
Third, among all compressed trees of size $n$, this class includes the smallest and the largest possible decompressed tree:
If all pointers end in the sink, the decompressed tree has size $n$, 
while if all pointers connect with the node just before in postorder the decompressed tree has size $k^n-1$. 

\medskip

Formally, let $k \geq 2$ be an integer and let $\CC$ be the set of $k$-ary chains. 
We denote by $\CC_n = \{ C \in \CC ~:~ |C| = n \}$ the set of $k$-ary chains conditioned to have $n$ internal nodes and by $c_n :=  |\CC_n| = (n!)^{k-1}$ its cardinality.
Let $X_n$ be the random variable associated with the decompressed size of chains $\CC_n$ chosen uniformly at random, i.e.,
\begin{align*}
	\PR( X_n = k ) = \frac{ \left| \{ C \in \CC_n ~:~ |C|_{\Tc} = k \} \right| }{c_n}.
\end{align*}
We are interested in the distribution of $X_n$. 
Our main result is the following asymptotics of the average decompressed size $\E(X_n)$ of $k$-ary chains which includes a stretched exponential.
For comparison, the minimal size is $n = e^{\log (n)}$  (i.e., all pointers to sink)
and the maximal size is $k^n-1 \approx e^{n \log(k)}$ (i.e., all pointers to previous internal node).

\begin{theorem} 
	\label{theo:decompressed}
	The expected decompressed tree size of $k$-ary chains with $n$ internal nodes is for $n\to \infty$ asymptotically given by
	\begin{align*}
		\E(X_n) & 	= \frac{1}{2\sqrt{e^{k-1} \pi}  \, (k-1)^{5/4}} \frac{e^{2 \sqrt{(k-1) n}}}{n^{1/4}} \left(1 + \LandauO\left(\frac{1}{\sqrt{n}}\right)\right).
	\end{align*}
\end{theorem}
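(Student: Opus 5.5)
We set up a recurrence for the mean directly from the chain structure and then pass to a generating function. Label the internal nodes of a chain in postorder by $1,\dots,n$ and the sink by $0$. Node $i$ has its internal edge to node $i-1$ and $k-1$ pointers, each of which independently ends at any of the $i$ nodes $0,\dots,i-1$. This matches the count $c_n=\prod_{i=1}^n i^{k-1}=(n!)^{k-1}$, so the uniform measure on $\CC_n$ is exactly the product of independent uniform pointer choices.

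\textbf{Step 1: recurrence.} Let $s_i$ be the number of internal nodes of the decompressed fringe tree at node $i$, with $s_0=0$. By Definition~\ref{def:decompression},
\[
s_i = 1 + s_{i-1} + \sum_{\ell=1}^{k-1} s_{J_{i,\ell}},
\]
where the $J_{i,\ell}$ are independent and uniform on $\{0,\dots,i-1\}$. They are also independent of $s_0,\dots,s_{i-1}$, since those depend only on pointers at lower nodes. Put $e_i=\E(s_i)$, so that $\E(X_n)=e_n$. Taking expectations gives the linear recurrence
\[
i\,e_i = i + i\,e_{i-1} + (k-1)\sum_{j=0}^{i-1} e_j .
\]

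\textbf{Step 2: closed form.} Let $E(z)=\sum_{n\ge0} e_n z^n$. The recurrence translates into a first-order linear ODE:
\[
E'(z) = \Bigl(\frac{1}{1-z}+\frac{k-1}{(1-z)^2}\Bigr)E(z) + \frac{1}{(1-z)^3}, \qquad E(0)=0.
\]
An integrating factor is $(1-z)^{-1}e^{(k-1)/(1-z)}$. After the substitution $u=1/(1-z)$ the remaining integral becomes $\int_1^{u} e^{-(k-1)t}\,dt$, and we obtain
\[
E(z)=\frac{1}{k-1}\Bigl(e^{-(k-1)}\frac{1}{1-z}\exp\Bigl(\frac{k-1}{1-z}\Bigr)-\frac{1}{1-z}\Bigr).
\]
The second term contributes only $1/(k-1)$ to each coefficient, which is negligible. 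For the first term, expanding gives $[z^n](1-z)^{-1}e^{a/(1-z)}=\sum_{m\ge0}\frac{a^m}{m!}\binom{n+m}{m}$ with $a=k-1$. This is a Laguerre-polynomial expression, consistent with the closed forms announced for Section~\ref{sec:combinatorics}.

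\textbf{Step 3: asymptotics (main obstacle).} The only real work is the asymptotic extraction of $[z^n](1-z)^{-\beta}e^{a/(1-z)}$, which has an essential singularity at $z=1$. I would use a saddle-point analysis on a circle of radius $1-\sqrt{a/n}$, in the style of Wright's / Flajolet--Sedgewick's treatment of $e^{a/(1-z)}$. Alternatively, one can apply Laplace's method to the positive sum $\sum_m \frac{a^m}{m!}\binom{n+m}{m}$, whose terms peak at $m\approx\sqrt{an}$. Either route should yield
\[
[z^n](1-z)^{-\beta}e^{a/(1-z)} = \frac{e^{a/2}\,a^{1/4-\beta/2}}{2\sqrt{\pi}}\,n^{\beta/2-3/4}\,e^{2\sqrt{an}}\Bigl(1+\LandauO(n^{-1/2})\Bigr).
\]
Taking $\beta=1$ and multiplying by $e^{-a}/a$ gives $\frac{e^{-a/2}a^{-5/4}}{2\sqrt\pi}\,n^{-1/4}e^{2\sqrt{an}}$. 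With $a=k-1$ this is exactly the claimed formula. The care needed is in controlling the Gaussian approximation near the saddle, including the cubic correction, to get the relative error $\LandauO(n^{-1/2})$, and in bounding the contribution of the rest of the contour (or the tails of the sum).
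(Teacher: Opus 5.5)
Your proposal is correct and reaches exactly the paper's closed form, $E(z)=D(z,1)=\frac{1}{(k-1)(1-z)}\bigl(e^{(k-1)z/(1-z)}-1\bigr)$, but by a different route.

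\textbf{How the paper gets the generating function.} It never writes a recurrence for the mean. It uses the path bijection of Lemma~\ref{lem:Rtrajectories} to sort decompressed nodes by compression level. It then computes each $D_\ell(z)$ by induction on $\ell$, using the symbolic operations of Proposition~\ref{prop:remove-add-pointer}: appending sequences of roots, and removing a pointer as ``divide by $z$ and integrate''. Only at the end does it set $q=1$ in $D(z,q)$.

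\textbf{How you get it.} You use that the uniform measure on $\CC_n$ is a product of independent uniform pointer targets. This gives precisely the quicksort-type recurrence that the paper only mentions in the remark after its proof, the one Kac analysed for $k=2$. Your extra step of turning it into a first-order linear ODE and solving it exactly is what recovers the constant and the factor $n^{-1/4}$, which Kac's analysis did not give. One small slip: what you call the integrating factor is really its reciprocal, i.e.\ the homogeneous solution. Your final formula is nevertheless right.

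\textbf{What each route buys.} Yours is more elementary and self-contained for the mean, since it needs neither Lemma~\ref{lem:Rtrajectories} nor the pointer-removal calculus. The paper's route is built to deliver the full level profile $D(z,q)$, which is used in Section~\ref{sec:combinatorics} and for the limit law of $L_n$. Your method in fact gives that too with a one-line change. Mark each pointer traversal by $q$, i.e.\ set $s_i(q)=q+s_{i-1}(q)+q\sum_{\ell}s_{J_{i,\ell}}(q)$. This yields the same ODE with $k-1$ replaced by $(k-1)q$ and the inhomogeneous term multiplied by $q$, and its solution is exactly \eqref{eq:Dzqclosedform}.

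\textbf{Asymptotics.} This step coincides with the paper's saddle-point argument, and the paper likewise omits the tail estimates. Your general formula for $[z^n](1-z)^{-\beta}e^{a/(1-z)}$ is correct. Alternatively, since $\E(X_n)=\bigl(L_n(-(k-1))-1\bigr)/(k-1)$ with $L_n$ the Laguerre polynomials, you could simply invoke Perron's asymptotic formula for $L_n(x)$ at negative $x$, which gives the stated constant and relative error $\LandauO(n^{-1/2})$ directly.
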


The next two sections are devoted to the proof of this theorem. 
In Section~\ref{sec:cgf} we introduce a special class of generating functions that helps to model the problem,
and in Section~\ref{sec:rightheight} we study the depth of a node in the decompressed tree that will then lead to the final proof.

\section{The \texorpdfstring{$d$}{d}-exponential generating function}
\label{sec:cgf}

We will use a special class of generating functions. 
Let $d \geq 0$ be an integer.
For a counting sequence $(f_n)_{n \geq 0}$ we define the \emph{$d$-exponential generating function}
\begin{align}
	\label{eq:cgf}
	F(z) &= \sum_{n \geq 0} f_n \frac{z^n}{(n!)^d}.
\end{align}
For $d=0$ we recover ordinary generating functions and for $d=1$ exponential generating functions.
In the case of $k$-ary chains we will use $d=k-1$, as in this case the normalization is exactly the number $c_n=|\CC_n|$ of chains.
The choice in~\eqref{eq:cgf} allows us to extend the symbolic calculus on exponential generating functions introduced in~\cite{GenitriniGittenbergerKauersWallner2016} to $k$-ary DAGs. 
The subsequent lemma generalizes~\cite[Lemma~6.1]{GenitriniGittenbergerKauersWallner2016}, which concerns the symbolic construction of appending a new root node, and is the key symbolic construction.

\begin{figure}[h!]
	\centering
	\includegraphics[width=0.4\textwidth]{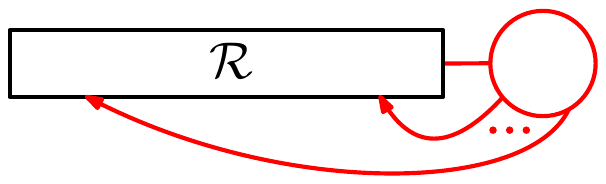}
	\caption{Adding a new root and $k-1$ pointers (in red) to a rooted $k$-ary DAG $\Rc$; see Lemma~\ref{lem:addingaroot}.
	}
	\label{fig:newroot}
\end{figure}

\begin{lemma}[Adding a new root]
\label{lem:addingaroot}
Let $\Rc$ be a combinatorial subclass of rooted $k$-ary DAGs.
Let~$\Sc$ be the combinatorial class whose elements consist of a new root node with out-degree~$k$, with an element of $\Rc$ as first child, and with $k-1$ pointers as children $2,\dots,k$; see Figure~\ref{fig:newroot}.   
Then, the associated $(k-1)$-exponential generating functions satisfy
\begin{align*}
	S(z) = z R(z).
\end{align*}
\end{lemma}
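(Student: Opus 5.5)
The argument is a counting argument at the level of individual objects: new root plus $k-1$ pointers. Let $s_n$ and $r_n$ be the numbers of elements of size $n$ in $\Sc$ and $\Rc$. An element of $\Sc$ of size $n+1$ is obtained by taking some $G \in \Rc$ of size $n$ and adding a new root. The root's first child is the root of $G$, connected by an internal edge. Its children $2,\dots,k$ are pointers, which can end at any of the $n$ internal nodes of $G$ or at the sink, giving $n+1$ targets each. The nodes of the new DAG are the nodes of $G$ plus the new root, so the size grows by exactly one.

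The first thing to check is that the depth-first spine of the new DAG is the spine of $G$ plus the new first edge. The first child is explored first, and its fringe subgraph already contains every node of $G$. The remaining $k-1$ edges therefore point to visited nodes and are pointers. Conversely, any choice of the $k-1$ targets among the $n+1$ available nodes yields a valid rooted ordered $k$-ary DAG: it stays acyclic because the new root is a source, and it still has a unique source and a unique sink. The construction is also injective and invertible: deleting the root recovers $G$ and the ordered tuple of targets. Hence
\[ s_{n+1} = (n+1)^{k-1} r_n, \qquad s_0 = 0. \]
The statement that a pointer may end at any node to the left is exactly the fact used earlier to count $c_n=(n!)^{k-1}$ chains.

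The generating-function identity then follows directly:
\[ S(z)=\sum_{n\ge0} s_{n+1}\frac{z^{n+1}}{((n+1)!)^{k-1}} = \sum_{n\ge 0} (n+1)^{k-1} r_n \frac{z^{n+1}}{((n+1)!)^{k-1}} = z\sum_{n\ge0} r_n\frac{z^n}{(n!)^{k-1}} = zR(z). \]

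The main obstacle is the first step, specifically the bookkeeping of the size convention. It has to be confirmed that the size counts internal nodes, so the sink is not counted but is still a legal pointer target. Under this convention a DAG of size $n$ offers exactly $n+1$ pointer targets, which is what makes the factor $(n+1)^{k-1}$ cancel against $((n+1)!)^{k-1}/(n!)^{k-1}$. As a check, the identity reproduces $c_n=(n!)^{k-1}$ when $\Rc=\Sc=\CC$ plus the single-sink base case: the chain generating function is $C(z)=1+zC(z)$, so $C(z)=1/(1-z)$.
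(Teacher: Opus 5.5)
Your proposal is correct and is essentially the paper's proof: both arguments count $s_{n+1}=(n+1)^{k-1}r_n$, using the $n+1$ possible targets ($n$ internal nodes plus the sink) for each of the $k-1$ new pointers, and then cancel $(n+1)^{k-1}$ against $((n+1)!)^{k-1}/(n!)^{k-1}$ in the $(k-1)$-exponential generating function. Your extra checks that the depth-first spine of $G$ is preserved and that the construction is invertible are details the paper leaves implicit.
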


\begin{proof}
	Consider a DAG $R \in \Rc$ of size $n$. 
	First, we deterministically attach a new node as first child to the root of $R$.
	Then, we attach $k-1$ additional pointers to this new root, where each of these may point to one of the $n+1$ possible nodes in $R$ ($n$ internal nodes and the leaf).
	
	Now, let $$R(z) = \sum_{n \geq 0} r_n \frac{z^n}{(n!)^{k-1}}$$ be the $(k-1)$-exponential generating function of $\Rc$.
	Then, a direct computation shows
	\begin{align*}
		S(z) &= \sum_{n \geq 0} (n+1)^{k-1} r_n \frac{z^{n+1}}{((n+1)!)^{k-1}} \\
		     &= \sum_{n \geq 0} r_n \frac{z^{n+1}}{(n!)^{k-1}}
					= z R(z).
	\end{align*}
	This proves the claim.
\end{proof}

Now, Lemma~\ref{lem:addingaroot} directly implies that the generating function of $k$-ary chains has a very simple closed form.
This generalizes~\cite[Corollary~6.2]{GenitriniGittenbergerKauersWallner2016} to out-degree $k$, always leading to the \emph{same} closed form yet different rescalings.

\begin{corollary}
	\label{cor:chains}
	The $(k-1)$-exponential generating function 
	\begin{align*}
		C(z) = \sum\limits_{C \in \CC} c_n \frac{z^{|C|}}{(n!)^{k-1}}
	\end{align*}	
	of $k$-ary chains $\CC$ is
	\begin{align*}
		C(z) = \frac{1}{1-z}.
	\end{align*}
\end{corollary}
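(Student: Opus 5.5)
We obtain Corollary~\ref{cor:chains} from Lemma~\ref{lem:addingaroot} via a recursive decomposition of chains according to their root. Every $k$-ary chain is either the trivial chain, consisting only of the sink and having $0$ internal nodes, or it is obtained from a smaller chain by adding a new root. In the latter case the new root's first child is the root of the smaller chain; this is the unique internal edge, because the spine is a path. The remaining $k-1$ children are pointers to arbitrary nodes of the smaller chain, including its sink. Conversely, take any chain of size $n$ and add a new root with the old root as first child and $k-1$ arbitrary pointers. The depth-first spine of the result is still a path, since the first child is explored first and the whole old chain is reached through it. Hence the result is a chain of size $n+1$.

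This gives the combinatorial specification
\[
\CC = \{\text{sink}\} \;\cup\; \Sc, \qquad \text{where } \Sc \text{ is built from } \Rc=\CC \text{ as in Lemma~\ref{lem:addingaroot}}.
\]
The two parts are disjoint, distinguished by the size ($0$ versus $\geq 1$). The decomposition is a bijection: removing the root of a chain of size $n+1$ recovers both the smaller chain and the $k-1$ pointer targets uniquely. The trivial chain contributes $c_0=1$, that is, the term $1$. Applying Lemma~\ref{lem:addingaroot} with $\Rc = \CC$ gives the functional equation
\[
C(z) = 1 + z\,C(z),
\]
and therefore $C(z) = 1/(1-z)$.

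As a sanity check, we verify the result at the level of coefficients. The recurrence $c_{n+1} = (n+1)^{k-1} c_n$ with $c_0=1$ gives $c_n = (n!)^{k-1}$. Hence $c_n/(n!)^{k-1} = 1$ for all $n$, which again yields $\sum_n z^n = 1/(1-z)$.

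The only point needing care is the bijectivity of the decomposition. Specifically, we must check that pointers may target any of the $n+1$ nodes (the $n$ internal nodes and the sink) without breaking the chain property or creating cycles. Cycles are impossible because all pointers go to nodes of the previously built sub-DAG. The chain property is preserved as argued above, because the first child is visited first in the depth-first search. This is exactly the counting used in the proof of Lemma~\ref{lem:addingaroot}, so no further obstacle arises.
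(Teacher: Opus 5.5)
Your proof is correct and takes essentially the same approach as the paper. The paper also observes that $c_n=(n!)^{k-1}$ makes the claim immediate, and then rederives it symbolically via Lemma~\ref{lem:addingaroot} as $C(z)=1+zC(z)$; you give both arguments in the reverse order and justify the bijectivity of the root decomposition more carefully than the paper does.
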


\begin{proof}
	The proof follows directly from the chosen construction of the $(k-1)$-exponential generating function in~\eqref{eq:cgf} since $f_n = (n!)^{k-1}$. 
	
	As it will be instrumental in the remainder, we will now give an alternative derivation using solely symbolic constructions from Lemma~\ref{lem:addingaroot}.
	Note that the proof follows verbatim the proof of~\cite[Corollary~6.2]{GenitriniGittenbergerKauersWallner2016}.
	Symbolically, a chain is either just a leaf of size $0$, or it is constructed from a non-empty chain by appending a new root node.
	On the level of generating functions, this translates into the functional equation
	\begin{align*}
		C(z) &= 1 + z C(z),
	\end{align*}
	and the claim follows.
\end{proof}

In the following proposition we define the two most important constructions of this paper.
First, we generalize the construction of attaching a single root node to the case of an arbitrary sequence of root nodes; see Figure~\ref{fig:seqrootskary}.
Second, we give the following DAG-specific interpretation of the classical pointing operator and its inverse; see Figure~\ref{fig:add-remove-firspointer}.
Note that the last interpretations are independent of the rescaling by $(n!)^{d}$ in the generating function~\eqref{eq:cgf}.
Its proof follows verbatim the one of \cite[Proposition~6.4]{GenitriniGittenbergerKauersWallner2016} which is why we omit its repetition here.

\begin{figure}[h!]
	\centering
	\includegraphics[width=0.95\textwidth]{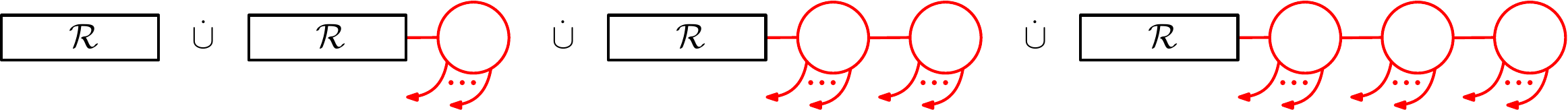}
	\caption{The sequence construction of repeatedly adding roots (in red) from Proposition~\ref{prop:remove-add-pointer}. Note that the first case corresponds to adding no new roots.
	}
	\label{fig:seqrootskary}
\end{figure}

\begin{proposition}[{{\cite[Proposition~6.4]{GenitriniGittenbergerKauersWallner2016}}}]
	\label{prop:remove-add-pointer}
	Let $\Rc$ be a combinatorial subclass of rooted $k$-ary DAGs.
	The $(k-1)$-exponential generating function $S(z)$ corresponding to the class obtained by appending an arbitrary (possibly empty but finite) sequence of nodes with out-degree $k$ to the root of $\Rc$ shown in Figure~\ref{fig:seqrootskary} is given by
	\begin{align*}
		S(z) &= \frac{1}{1-z} R(z).
	\end{align*}
	Let $\Rc_+$ be the class derived from $\Rc$ by adding a new first pointer to the root
	and let $\Rc_-$ be the class derived from $\Rc$ by removing the first pointer of the root; see Figure~\ref{fig:add-remove-firspointer}.
	Then, the corresponding $(k-1)$-exponential generating functions are
	\begin{align*}
		R_+(z) &= z \frac{d}{dz}R(z) + R(0), \\
		R_-(z) &= \int \frac{R(z)-R(0)}{z} \, dz.
	\end{align*}
\end{proposition}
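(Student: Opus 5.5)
The plan is to verify all three formulas directly on the coefficients of the $(k-1)$-exponential generating function~\eqref{eq:cgf}, exactly as in the proof of Lemma~\ref{lem:addingaroot}. The key observation is that none of the constructions changes the normalization $(n!)^{k-1}$ in a way that is not already handled by Lemma~\ref{lem:addingaroot}.

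\emph{Sequence construction.} I would first show that appending exactly $m\ge 0$ new roots corresponds to the generating function $z^m R(z)$. This follows by induction on $m$ using Lemma~\ref{lem:addingaroot}, since each step is precisely the "add a new root with $k-1$ pointers" construction applied to the class obtained at the previous step. Next, the classes for different $m$ are disjoint: the number of appended roots is recorded by the construction (it is the position of the original root of the $\Rc$-element along the new spine prefix). So the total class is the disjoint union over $m$, and
\[
S(z)=\sum_{m\ge0} z^m R(z)=\frac{R(z)}{1-z}
\]
as formal power series.

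\emph{Adding a first pointer.} Take an element of $\Rc$ of size $n\ge1$. A new pointer from the root may point to any node reachable in the DAG other than the root itself, since pointing to the root would create a cycle. These targets are the $n-1$ non-root internal nodes together with the sink, so there are exactly $n$ choices. The size, and hence the normalization $(n!)^{k-1}$, is unchanged. Therefore the coefficient $r_n$ becomes $n\,r_n$, which is the coefficient of $z\frac{d}{dz}R(z)$. For $n=0$ the object is the bare leaf; the convention of \cite{GenitriniGittenbergerKauersWallner2016} keeps it unchanged, which contributes the term $R(0)$. This gives $R_+(z)=zR'(z)+R(0)$.

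\emph{Removing a first pointer.} This is the inverse map. For $n\ge1$, each DAG with the first root pointer removed arises from exactly $n$ DAGs in $\Rc$, one for each of the $n$ possible targets counted above. Hence the new coefficient is $r_n/n$, and again the normalization does not change. Summing gives $\sum_{n\ge1} r_n z^n/(n\,(n!)^{k-1})=\int (R(z)-R(0))/z\,dz$, with integration constant zero.

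\emph{Main obstacle.} The only delicate point is bookkeeping rather than analysis. I must make sure the target count of a pointer is exactly $n$, independently of $k$ and of the pointers already present. I must also fix the size-$0$ convention consistently so that the $R(0)$ terms come out right, and I must check that the removal map really is $n$-to-one onto the class $\Rc_-$. I would settle these by fixing the convention that size counts internal nodes, so the sink is always an admissible target, and by treating $n=0$ separately.
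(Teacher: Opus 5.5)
Your proposal is correct and is essentially the argument the paper defers to \cite{GenitriniGittenbergerKauersWallner2016}: iterate Lemma~\ref{lem:addingaroot} and sum the geometric series, recording the position of the original root as you note, and observe that a root pointer in an object of size $n$ has exactly $n$ targets while the size, and hence the $(n!)^{k-1}$ normalization, is unchanged. The one point to state explicitly is that the $n$-to-one claim for $\Rc_-$ does not follow from size conventions or from treating $n=0$ separately. It is an implicit hypothesis that the first root pointer in $\Rc$ ranges freely over all $n$ targets. This fails for an arbitrary subclass, but it holds in the application in Theorem~\ref{theo:Dellclosedform}, where that pointer belongs to the freshly attached root $w$.
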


\begin{figure}[h!]
	\centering
	\includegraphics[width=0.7\textwidth]{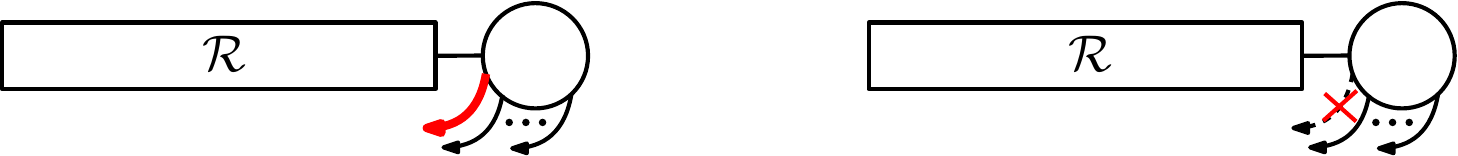}
	\caption{Left: The class $\Rc_+$ obtained by adding a new pointer (red bold edge) to the root of $\Rc$. Right: The class $\Rc_-$ obtained by removing the first pointer of the root of $\Rc$. Note that the resulting graphs are in general not $k$-ary DAGs since the root has too many or too few pointers, yet these constructions are needed in the recursive construction of Theorem~\ref{theo:Dellclosedform}.
	}
	\label{fig:add-remove-firspointer}
\end{figure}

\begin{remark}[Comparison with the binary case]
	The result of Proposition~\ref{prop:remove-add-pointer} on adding and removing pointers is the same as in the binary case, with the only extension that the location of the added pointer needs to be deterministic. 
	Hence, the claim stays valid if a different pointer is removed/added at a fixed location, e.g., the second, third, last, etc.
\end{remark}

\section{The compression depth}
\label{sec:rightheight}

The key idea to prove Theorem~\ref{theo:decompressed} is to relate the nodes in the decompressed trees with a statistic in the corresponding DAGs; see Figure~\ref{fig:compressiondepth}.

\begin{lemma}
	\label{lem:Rtrajectories}	
	Let a rooted DAG $G$ be given.
	Each node $v$ in $\DD(G)$ is in bijection with a path~$P_v$ in $G$ from the root to the compressed instance of $v$.
\end{lemma}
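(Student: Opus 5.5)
The plan is to prove the bijection by induction along the decompression procedure of Definition~\ref{def:decompression}, processing nodes of $G$ in postorder of the spine. For each node $u$ of $G$, let $T_u$ be the tree produced by the decompression at $u$; this is the decompression of the fringe subgraph of $u$. I would prove the stronger statement that the nodes of $T_u$ are in bijection with the directed paths in $G$ starting at $u$. Here every edge counts, internal or pointer, and the bijection sends a node $w$ of $T_u$ to a path ending at the compressed instance of $w$. Taking $u$ to be the root then gives the lemma, since $T_{\text{root}}=\DD(G)$.

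For the base case, a sink $u$ is replaced by a leaf. Its tree $T_u$ has exactly one node, and there is exactly one path starting at $u$, namely the empty path. For the inductive step, take an internal node $u$ with ordered out-edges $e_1,\dots,e_k$ to children $u_1,\dots,u_k$ (possibly with repetitions). By construction, $T_u$ consists of a root, the copy of $u$, with the $i$-th subtree being a copy of $T_{u_i}$. This holds whether $e_i$ is internal, in which case $T_{u_i}$ was built earlier in postorder, or a pointer, in which case Definition~\ref{def:decompression} inserts a copy of the already decompressed fringe subtree.

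The key subtlety is that this copy must equal the full decompression $T_{u_i}$ and not a partially processed object. Since $G$ is acyclic and the spine is a DFS tree, any pointer from $u$ goes to a node whose fringe subgraph has already been fully processed when $u$ is handled. I would verify this from the DFS and postorder properties: a non-tree edge in a DAG under DFS is a forward or cross edge, so its target is finished before $u$ is finished.

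Given that, the nodes of $T_u$ decompose as the root together with the disjoint union over $i$ of the nodes of the $i$-th copy of $T_{u_i}$. The paths starting at $u$ decompose correspondingly as the empty path together with, for each $i$, the paths of the form $e_i$ followed by a path starting at $u_i$. Distinct parallel edges give distinct paths, which matches the ordered children. Combining these decompositions with the induction hypothesis for each $u_i$ gives the bijection, and the endpoints are preserved, so the compressed instances match.

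I expect the main obstacle to be making the ordering argument rigorous, namely that pointers always target fully processed nodes. With that in place, the remainder is a routine structural induction.
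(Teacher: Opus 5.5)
Your proof is correct. It rests on the same idea as the paper's: $\DD(G)$ is the unfolding of $G$ into its tree of paths. The route is different, though. The paper argues globally in a few lines: it asserts that $\DD$ ``does not change the local topologies of fringe subgraphs'', concludes that root paths in $G$ correspond to root paths in $\DD(G)$, and then uses that each tree node has a unique root path. You instead strengthen the statement to an arbitrary node $u$ (nodes of $T_u$ versus paths starting at $u$). You then prove it by structural induction on the recursive decomposition of $T_u$ into a root with copies of $T_{u_1},\dots,T_{u_k}$.

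Your version actually proves the path correspondence that the paper only asserts. It also makes explicit two points the paper leaves implicit:
\begin{enumerate}
\item Step~3 of Definition~\ref{def:decompression} is well defined. In a DFS of a DAG every edge $(u,v)$ has $v$ finishing before $u$, and postorder of the spine is exactly finishing order, so pointer targets are already fully decompressed.
\item Paths must be read as edge sequences. This matters for chains in which a pointer targets the spine child, which is precisely the extreme case of size $k^n-1$.
\end{enumerate}
The paper's version is shorter but leans on an informal phrase.

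One small remark: your aside that $T_u$ ``is the decompression of the fringe subgraph of $u$'' is not needed. It would itself require the lemma, because a DFS started at $u$ may pick a different spine than the restriction of $G$'s spine. It is enough to define $T_u$ as the structure rooted at $u$ once $u$ has been processed, which is what your induction actually uses.
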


\begin{proof}
	Let $G$ be a graph and $T=\DD(G)$ be its decompressed tree.
	Observe that by definition the decompression operator $\DD$ does not change the local topologies of fringe subgraphs. 
	Therefore, each path in $G$ is by the decompression operator in bijection with a path in~$T$, and vice versa.
	By definition, each node in a tree has a unique path to its root. 
	Hence, we can associate each node in~$T$ with this unique path in~$T$, and the claim follows. 
\end{proof}

Note that in $G$ several paths might end in the same node, but different paths correspond to different nodes in the decompressed tree $\DD(G)$.
From now on, we use the notation $P_v$ for the unique path in $G$ that is associated with a node $v \in \DD(G)$.
In a next step, we use these paths to define the compression depth and the compression level of a node using the concept of pointers from Definition~\ref{def:pointersetc}.
Note that we introduce the redundant concept of a compression level because most formulas are simpler when expressed in this way.  

\begin{definition}[Compression depth and level]
\label{def:compressiondepth}
Let $G$ be a rooted DAG.
The \emph{compression depth} of a node $v \in \DD(G)$ is defined as the number of pointers in $P_v$.
The \emph{compression level} of a node $v$ is defined as its compression depth plus one; see Figure~\ref{fig:compressiondepth}.
\end{definition}

\begin{figure}[t]
	\centering
	\includegraphics[width=1\textwidth]{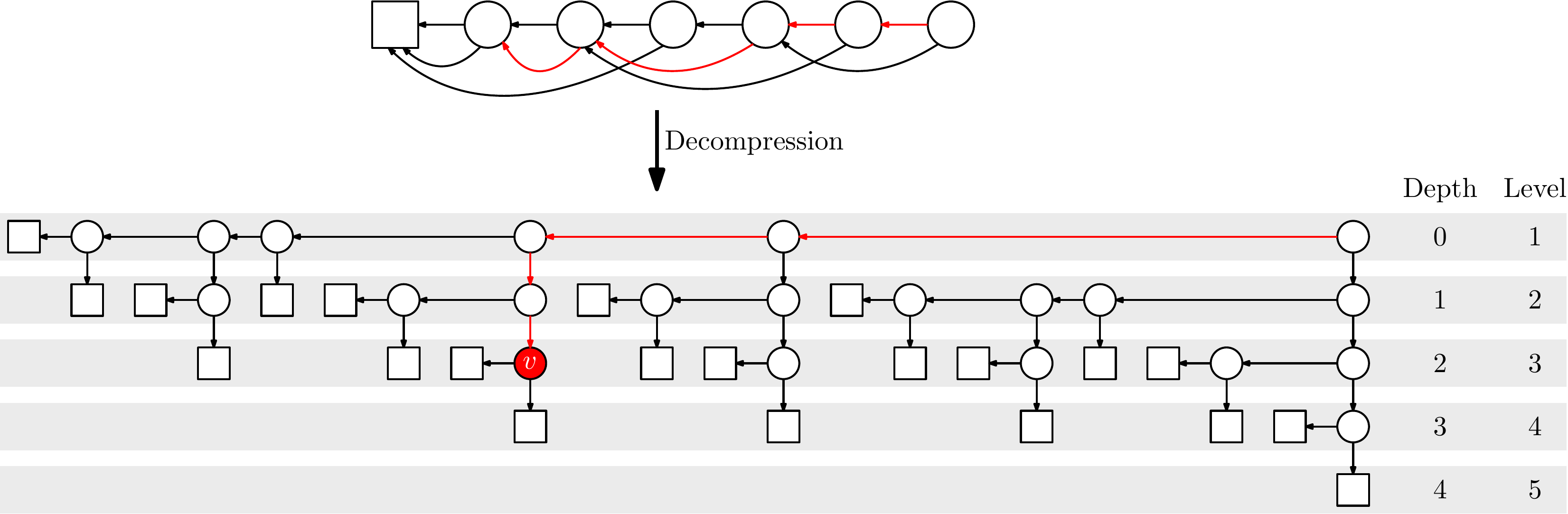}
	\caption{A binary chain $C$ (top) and its decompressed tree $\DD(C)$ (bottom). 
	The decompression depths and levels from Definition~\ref{def:compressiondepth} are stated on the right. 
	Lemma~\ref{lem:Rtrajectories} associates the red path $P_v$ in $C$ with the node $v \in \DD(C)$. }
	\label{fig:compressiondepth}
\end{figure}

\begin{remark}[Compression depth as a measure of redundancy]
The compression depth measures how often the compression operator was used to compress a specific node.
In this sense, it is a qualitative measure for the redundancy of this specific node in the tree.
Note that this notion depends strongly on the chosen 
traversal in the decompression operator. 
Other traversals might assign other compression levels to the same nodes.
\end{remark}

Our strategy is to count the number of decompressed nodes per compression level 
 in all chains of size $n$ using the previously introduced $(k-1)$-exponential generating functions.
For this purpose, let $d_{\ell,n}$ be the number of nodes on compression level $\ell \geq 1$ in $\DD(\CC_n)$, i.e., all decompressed $k$-ary chains of size~$n$.
We denote the associated $(k-1)$-exponential generating function by 
\begin{align*}
	D_{\ell}(z) = \sum_{n \geq 0} d_{\ell,n} \frac{z^n}{(n!)^{k-1}}. 
\end{align*}
In the following result we show that it has a remarkably simple closed form.

\begin{theorem}
	\label{theo:Dellclosedform}
	The $(k-1)$-exponential generating function $D_{\ell}(z)$ of the number of nodes on compression level $\ell \geq 1$ in all decompressed $k$-ary chains is given by
	\begin{align*}
		D_{\ell}(z) 
			&= \frac{(k-1)^{\ell-1} z^{\ell}}{\ell! (1-z)^{\ell+1}}%
			= \frac{(k-1)^{\ell-1} }{\ell!} \sum_{n \geq \ell} \binom{n}{\ell} z^{n}%
		.
	\end{align*}
	Therefore, the bivariate generating function 
	$
		D(z,q) = \sum_{\ell \geq 1} D_{\ell}(z) q^{\ell}
	$ 
	is given by
	\begin{align}
		\label{eq:Dzqclosedform}
		D(z,q) &= \frac{1}{(k-1)(1-z)} \left( e^{\frac{(k-1)qz}{1-z}} - 1 \right).
	\end{align}
\end{theorem}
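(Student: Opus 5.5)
The plan is to derive a linear recursion in $\ell$ for $D_\ell(z)$ by decomposing a path $P_v$ at its \emph{last} pointer. Solving that recursion gives the closed form by induction, and summing over $\ell$ gives \eqref{eq:Dzqclosedform}. Throughout, write $a_{\ell,n}=d_{\ell,n}/(n!)^{k-1}$, which is the expected number of level-$\ell$ nodes in a uniform chain of size $n$. Then $D_\ell(z)=\sum_n a_{\ell,n}z^n$ is an ordinary generating function in the $a_{\ell,n}$.

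\emph{Base case $\ell=1$.} By Lemma~\ref{lem:Rtrajectories}, nodes at compression depth $0$ correspond to pointer-free paths from the root that end in an internal node. In a chain these are exactly the $n$ spine prefixes. Hence $a_{1,n}=n$ and $D_1(z)=z/(1-z)^2$, which agrees with the claimed formula for $\ell=1$.

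\emph{Recursion.} Label the internal nodes of a chain of size $n$ by $1,\dots,n$ from the sink upwards, with the sink labelled $0$. A path with $\ell$ pointers is split into three pieces:
\begin{itemize}
\item a spine segment from the root down to some node $u\in\{1,\dots,n\}$;
\item one of the $k-1$ pointers of $u$, pointing to some $w\in\{0,\dots,u-1\}$;
\item a path with $\ell-1$ pointers inside the fringe subgraph of $w$.
\end{itemize}
The fringe subgraph of $w$ is itself a chain of size $w$. Under the uniform measure its pointers are uniform and independent of the pointers of $u$. The pointer of $u$ hits $w$ with probability $1/u$. Therefore
\[
a_{\ell+1,n}=(k-1)\sum_{u=1}^{n}\frac{1}{u}\sum_{w=0}^{u-1}a_{\ell,w}.
\]
This is the step to check carefully: the independence and uniformity of the fringe chain, and the fact that every path with $\ell$ pointers decomposes uniquely in this way. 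I expect it to be the main obstacle. Symbolically it is the composition of the constructions in Lemma~\ref{lem:addingaroot} and Proposition~\ref{prop:remove-add-pointer}: adding a root with a distinguished pointer, integrating, and then adding a sequence of roots.

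On generating functions, the partial sum $\sum_{w<u}$ becomes multiplication by $z/(1-z)$. The factor $1/u$ becomes $\int \cdot\,\frac{dz}{z}$. The outer sum over $u\le n$ becomes multiplication by $1/(1-z)$. This gives
\[
D_{\ell+1}(z)=\frac{k-1}{1-z}\int_0^z \frac{D_\ell(t)}{1-t}\,dt .
\]

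\emph{Solving.} Assume inductively that $D_\ell=\frac{(k-1)^{\ell-1}z^\ell}{\ell!(1-z)^{\ell+1}}$. The integrand is then $\frac{(k-1)^{\ell-1}}{\ell!}\,\frac{z^\ell}{(1-z)^{\ell+2}}$. Since
\[
\frac{d}{dz}\Bigl(\frac{z}{1-z}\Bigr)^{\ell+1}=(\ell+1)\frac{z^\ell}{(1-z)^{\ell+2}},
\]
the integral equals $\frac{(k-1)^{\ell-1}}{(\ell+1)!}\bigl(\frac{z}{1-z}\bigr)^{\ell+1}$. Multiplying by $\frac{k-1}{1-z}$ yields the formula for $\ell+1$. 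The coefficient form follows from $z^\ell/(1-z)^{\ell+1}=\sum_{n}\binom{n}{\ell}z^n$. Finally,
\[
D(z,q)=\sum_{\ell\ge1}\frac{1}{(k-1)(1-z)}\frac{1}{\ell!}\Bigl(\frac{(k-1)qz}{1-z}\Bigr)^{\ell},
\]
and summing the exponential series gives \eqref{eq:Dzqclosedform}.
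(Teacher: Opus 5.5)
Your proof is correct and matches the paper's argument. The paper splits a path at the same place you do: a spine segment from the root, then one of the $k-1$ pointers to a node $w$, then a path with $\ell-1$ pointers inside the fringe chain of $w$. It phrases this through the symbolic operations of Lemma~\ref{lem:addingaroot} and Proposition~\ref{prop:remove-add-pointer} and reaches the same equation $D_{\ell+1}(z)=\frac{k-1}{1-z}\int_0^z \frac{D_\ell(t)}{1-t}\,dt$; your coefficient recursion, justified by linearity of expectation and independence of $u$'s pointers from the fringe chain of $w<u$, makes those symbolic steps explicit. One wording slip: the pointer you split at is the one nearest the root, i.e.\ the first on the path, not the ``last'' as your opening sentence says.
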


\begin{proof}
	We will compute the generating functions $D_{\ell}(z)$ inductively with respect to the level~$\ell$. 
	By Lemma~\ref{lem:Rtrajectories} a node is on level $\ell$ (i.e., has compression depth $\ell-1$) if and only if it is reached from the root after traversing exactly $\ell-1$ pointers. 
	
	For level $\ell=1$ we consider all nodes at compression depth $0$. 
	Each node that can be reached without traversing any pointers lies on this level.
	Therefore, these are all nodes of the spine.
	Thus, $d_{1,n} = n \cdot (n!)^{k-1}$ and we directly get the claimed closed form $D_1(z) = \frac{z}{(1-z)^2}$.
	
	
	For $\ell \geq 2$ we continue by induction. 
	Assume that the generating function $D_{\ell}(z)$ satisfies the claimed closed form.
	In order to compute the nodes on level $\ell+1$, we lift the nodes on level $\ell$ by one level.
	The idea behind this is the following generic structure:
	There are nodes $v$ and $w$ connected by a pointer such that $v$ is the root of $D_{\ell}(z)$ and there is a (possibly empty) sequence of nodes between $v$ and $w$, as well as before $w$.
	We can model this symbolically as follows:
	\begin{enumerate}
		\item attach a (possibly empty) sequence of nodes to $D_{\ell}(z)$; 
		\item\label{item:newrootw} attach a node $w$ as new root;
		\item\label{item:replace} replace one of the $k-1$ pointers of $w$ by a pointer to $v$;
		\item attach another sequence of (possibly empty) nodes to $w$. 
	\end{enumerate}
	Using Proposition~\ref{prop:remove-add-pointer}, the above operations correspond to the following operations on generating functions, respectively:
	\begin{enumerate}
		\item multiply by $\frac{1}{1-z}$;
		\item multiply by $z$;
		\item divide by $z$ and integrate with respect to $z$, then multiply by $k-1$ since we can replace any of the $k-1$ pointers and not only the first one (replacing the second, third, etc. pointer of the root is analogous to the first one from Proposition~\ref{prop:remove-add-pointer});
		\item multiply by $\frac{1}{1-z}$.
	\end{enumerate}
	Therefore, we get by induction:
	\begin{align*}
		D_{\ell+1}(z) &= \frac{k-1}{1-z} \int \frac{1}{1-z} D_{\ell}(z) \, dz 
		                   = \frac{(k-1)^{\ell} z^{\ell+1}}{(\ell+1)! (1-z)^{\ell+2}}.
	\end{align*}
	Note that $D_{\ell}(0)=0$ for $\ell \geq 1$.
	The coefficients then follow directly from this representation by the generalized binomial theorem.
	
	Finally, the closed form of $D(z,q)$ follows directly by summation over $\ell \geq 1$.
\end{proof}

Using this closed form, we are now ready to prove our main result on the average decompressed size of an element of $\CC_n$ when chosen uniformly at random.
Remarkably, we see that a stretched exponential appears.

\begin{proof}[Proof of Theorem~\ref{theo:decompressed}]
	%
	First observe that $\E(X_n) = [z^n]D(z,1)$, as the rescaling by $(n!)^{k-1}$ is equal to the number of elements in $\CC_n$. 
	Then, this result follows directly from an application of the saddle-point method~\cite[Chapter~VIII]{flaj09} to 
	\begin{align*}
		D(z,1) = \frac{1}{(k-1)(1-z)} \left( e^{\frac{(k-1)z}{1-z}} - 1 \right).
	\end{align*}
	For this purpose, we use Cauchy's integral formula, which expresses the coefficients of the power series in terms of a complex contour integral.
	Subsequently, it is possible to transform this contour such that the main contribution can be approximated by a Gaussian integral. 
	We omit the standard technical details of proving that all neglected terms are negligible that arise from pruning and bounding the tails of the integral.
	See~\cite[pp.~596]{flaj09} for this concrete example for $k=2$.
\end{proof}

\begin{remark}
	An alternative approach to derive the previous result relies on a recurrence for $\E(X_n)$ that can be derived similarly to the recurrence that determines the average case complexity of quicksort.
	In the binary case, this approach leads to a recurrence analyzed by Kac~\cite{Kac1989Ulam}, who proves the stretched exponential term $e^{2\sqrt{n}}$, but does not give the polynomial term $n^{-1/4}$ nor the multiplicative constant.
\end{remark}

In the next section, we will discuss the rich combinatorics behind the closed form~\eqref{eq:Dzqclosedform}.

\section{Combinatorics of decompressed chains}
\label{sec:combinatorics}

We first discuss the binary case, i.e., $k=2$.
Let us start by taking a closer look at the underlying counting sequences of $D(z,1)$, which correspond to the total number of decompressed nodes in binary chains $\CC$. 
The exponential generating function is given by
\begin{align*}
	D(z) &:= \frac{1}{1-z} \left( e^{\frac{z}{1-z}} - 1 \right) 
			\\&
			= z + 5 \frac{z^2}{2!} + 28 \frac{z^3}{3!} + 185 \frac{z^4}{4!} + 1426 \frac{z^5}{5!} + 12607 \frac{z^6}{6!} + \dots.
\end{align*}
This counting sequence is found in the OEIS\footnote{The On-Line Encyclopedia of Integer Sequences \url{https://oeis.org}} as \OEIS{A070779}, which enumerates strictly partial permutations of~$[n] := \{1,\dots,n\}$.
\emph{Partial permutations} are mappings $\sigma$ from $[n]$ to $[n]^* := [n] \cup \{\infty\}$ such that they are injections when restricted to $\sigma^{-1}([n])$.
Equivalently, they are words of length~$n$ with letters from $[n]^*$ in which each $i \in [n]$ may appear at most once.
\emph{Strictly partial permutations} of~$[n]$ are partial permutations that are not permutations, i.e., at least one $\infty$ appears.
Therefore, the nodes in decompressed binary chains $\DD(\CC)$  are in bijection with strictly partial permutations, while binary chains $\CC$ are in bijection with permutations.
Moreover, using permutation matrices, one sees that rook placements are in bijection with permutations, and partial rook placements with partial permutations.
This notion also allows us to give $D(z,q) = \frac{1}{1-z} ( e^{\frac{qz}{1-z}} - 1)$ a combinatorial interpretation.
Observe that Laguerre polynomials $L_n(q)$ are defined by
	\begin{align*}
		\sum_{n \geq 0} L_{n}(q) z^n = \frac{1}{1-z} e^{- \frac{qz}{1-z}}.
	\end{align*}
	Therefore, on the one hand, $n!(L_{n}(-q) -1)$ are the generating polynomials of the distribution of nodes per layer in $\DD(\CC_n)$.
	On the other hand, $n! x^n L_{n}(-x^{-1})$ are known as \emph{rook polynomials}, in which the coefficient of $x^k$ is equal to the number of non-attacking configurations of $k$ rooks on a $n \times n$ chessboard.
	
We will now discuss the combinatorics of the remarkably simple formula for $d_{\ell,n}$ from Theorem~\ref{theo:Dellclosedform} and give an alternative proof using a bijective argument.
In the binary case, the corresponding sequences in the OEIS for $\ell=1,\dots,5$ are 
\OEISs{A001563},
\OEISs{A001809},
\OEISs{A001810},
\OEISs{A001811},
\OEISs{A001812}, respectively.


\begin{proposition}
	\label{prop:delln}
	The number $d_{\ell,n}$ of internal nodes on compression level $\ell$ in all decompressed $k$-chains $\DD(\CC_n)$ with $n$ internal nodes is
	\begin{align*}
		d_{\ell,n} &= n! \frac{(k-1)^{\ell-1}}{\ell!} \binom{n}{\ell}.
	\end{align*}
\end{proposition}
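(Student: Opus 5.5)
The plan is to derive the formula directly from Theorem~\ref{theo:Dellclosedform} by coefficient extraction, and then to give the bijective explanation this section is aiming for. I read the statement in the binary setting this section works in ($k=2$). There the prefactor $n!$ is exactly the normalisation $c_n=|\CC_n|$, and the factor $(k-1)^{\ell-1}$ is kept to record the pointer choices.

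\textbf{Step 1 (generating function).} By definition $D_\ell(z)=\sum_n d_{\ell,n} z^n/c_n$, with $c_n=n!$ in the binary case. Theorem~\ref{theo:Dellclosedform} gives
\begin{align*}
D_\ell(z)=\frac{(k-1)^{\ell-1}}{\ell!}\sum_{n\ge \ell}\binom{n}{\ell}z^n .
\end{align*}
Multiplying $[z^n]D_\ell(z)$ by $n!$ yields $d_{\ell,n}=n!\,\frac{(k-1)^{\ell-1}}{\ell!}\binom{n}{\ell}$. This already proves the statement as written. The remaining steps give the combinatorial proof promised in the text.

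\textbf{Step 2 (encoding nodes as paths).} By Lemma~\ref{lem:Rtrajectories}, a node on level $\ell$ corresponds to a pair $(C,P)$, where $C\in\CC_n$ and $P$ is a path from the root to an internal node that uses exactly $\ell-1$ pointers. Label the spine nodes $1,\dots,n$ from the sink side, and note that node $m$ has $m$ possible targets for each pointer. The path then descends along the spine, leaves by a pointer at nodes $a_1>a_2>\dots>a_{\ell-1}$, lands at targets $b_i<a_i$ with $a_{i+1}\le b_i$, and stops at some internal node $e\le b_{\ell-1}$. Each $a_i$ contributes a factor $(k-1)$ for the choice of which pointer to use. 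Fixing that pointer removes a factor $a_i$ from the $n!$ free choices of the chain.

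\textbf{Step 3 (the key counting identity, expected to be the main obstacle).} It remains to show
\begin{align*}
\sum \prod_{i=1}^{\ell-1}\frac{1}{a_i}=\frac{1}{\ell!}\binom{n}{\ell},
\end{align*}
where the sum runs over all admissible $(a_i,b_i,e)$. My first attempt is a bijection rather than a computation. Read the chain as a permutation (in the binary case chains are in bijection with permutations) and mark $\ell$ distinguished positions. The $\ell$ spine segments of the path, cut at the pointer jumps, should correspond to an $\ell$-subset of $[n]$. The $1/\ell!$ should come from the internal ordering of the marked points, via a cyclic-lemma or insertion argument that absorbs the factors $1/a_i$: inserting element $a_i$ into a permutation of $a_i-1$ elements has $a_i$ choices, one of which is forced.

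If the bijection resists, I fall back on induction on $\ell$. Summing over the last jump reproduces exactly the recursion $D_{\ell+1}=\frac{k-1}{1-z}\int \frac{D_\ell}{1-z}\,dz$ at the level of coefficients, which gives $\binom{n}{\ell+1}/(\ell+1)!$ from $\binom{n}{\ell}/\ell!$ by a hockey-stick summation.
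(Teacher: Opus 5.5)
Your proposal is correct, and Step~1 alone is a complete proof. By definition $d_{\ell,n}/c_n=[z^n]D_\ell(z)$, and Theorem~\ref{theo:Dellclosedform} gives this coefficient as $\frac{(k-1)^{\ell-1}}{\ell!}\binom{n}{\ell}$. That theorem was proved independently by the symbolic method, so nothing is circular. Your caution about $k$ is justified. The same extraction for general $k$ gives the prefactor $c_n=(n!)^{k-1}$, and the printed $n!$ is only right for $k=2$: for $k=3$, $n=2$, $\ell=1$ one has $d_{1,2}=4\cdot 2=8$, not $4$. The paper's own argument in fact counts $c_n\binom{n}{\ell}(k-1)^{\ell-1}$ objects.

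Your route differs from the paper's, which deliberately avoids generating functions. It takes chains with $\ell$ marked internal nodes $v_0<\dots<v_{\ell-1}$ together with pointer indices $w_i$, and builds $\Phi=\Phi_1\circ\dots\circ\Phi_{\ell-1}$. Here $\Phi_i$ redirects a violating pointer $p_i$ to $v_{i-1}$ and shifts the marked nodes lying between $p_i$ and $v_i$. As a result, each valid traversal $v_0\le p_1<v_1\le\dots\le p_{\ell-1}<v_{\ell-1}$ has $2\cdot 3\cdots\ell=\ell!$ preimages. Your Step~3 bijection is never actually constructed, since the ``cyclic-lemma or insertion argument'' is left unspecified. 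The paper's $\Phi_i$, with its $i+1$ preimages, is exactly the insertion mechanism you are after.

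Your fallback induction does work. Let $F_\ell(m)$ be the weighted sum of $\prod 1/a_i$ over paths starting at spine node $m$. Decompose at the jump nearest the root, $a_1$, which is the one the recursion adds rather than the ``last'' one along the path. This gives $F_{\ell+1}(m)=\sum_{a\le m}\frac1a\sum_{b<a}F_\ell(b)$. Then $F_\ell(b)=\binom{b}{\ell}/\ell!$ propagates by two hockey-stick sums and the absorption identity $\frac1a\binom{a}{\ell+1}=\frac1{\ell+1}\binom{a-1}{\ell}$. However, this is precisely the coefficient form of $D_{\ell+1}=\frac{k-1}{1-z}\int\frac{D_\ell}{1-z}\,dz$, so it adds self-containment rather than new insight.

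In short, your argument is the quickest and works uniformly in $k$. The paper's argument explains both factors directly on the chains: $\binom{n}{\ell}$ comes from marking $\ell$ nodes and $1/\ell!$ from an $\ell!$-to-$1$ map, which is the stated purpose of that section.
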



\begin{proof}
	We will prove that $\ell! d_{\ell,n} = n! (k-1)^{\ell-1} \binom{n}{\ell}$, by showing that both sides count the same objects. 
	On the one hand, we interpret the left-hand side canonically as the number of nodes on level $\ell$ in $\DD(\CC_n)$ where each node comes with multiplicity $\ell!$.
	On the other hand, we now describe a direct construction that marks each element on level $\ell$ exactly $\ell!$ times and that is enumerated by the right-hand side.	
	
	We will again use the characterization given by Lemma~\ref{lem:Rtrajectories}:
	For a given $C \in \CC_n$, each decompressed node $v_0 \in \DD(C)$ on level $\ell$ is in bijection with a path $P_{v_0}$ in $C$ consisting of $\ell-1$ pointers.
	The key observation is now that $P_{v_0}$ can be uniquely identified by the final node $v_0$, the nodes $V_P = \{v_1,\dots,v_{\ell-1} \}$, and numbers $w_i \in \{1,2,\dots,k-1\}$ for $i=1,\dots,\ell_1$ as follows:
	The path starts at the root and traverses $C$ until it reaches~$v_0$ such that it continues along the $w_i$-th pointer at the node $v_i$ and along an internal edge otherwise. 
	Our strategy is now to choose $\ell$ internal nodes of $C$ and check whether these define such a \emph{valid traversal}.
	
	For a given $C \in \CC_n$ we label the nodes in the spine increasingly starting from the sink with $\{0,1,\dots,n\}$.
	From now on we will identify the nodes with their labels.
	Then, we choose $\ell$ of its $n$ internal nodes, say $v_0,\dots,v_{\ell-1}$ such that 
	\[
		v_0 < v_1 < \dots < v_{\ell-1}.
	\]
	Let $p_1,\dots,p_{\ell-1}$ be the targets of the pointers associated with the nodes $v_1,\dots,v_{\ell-1}$, respectively. 
	Here, we have for each node $k-1$ choices,	which correspond to the numbers $w_i$ for $i=1,\dots,\ell-1$.
	Note that in the following construction we will not change the positions $w_i$ of the pointers. 
	This independence will then result in the factor $(k-1)^{\ell-1}$ in the final formula.
	
	Now observe that by definition of $\CC$, we have $p_i < v_i$. 
	In order to be a valid traversal, these must additionally satisfy 
	\begin{align}
		\label{eq:validvipi}
		v_{i-1} \leq p_{i}.
	\end{align}
	In other words, the nodes and their pointers must interlace (with allowed overlaps):
	\begin{align*}
		v_0 \leq p_1 < v_1 \leq p_2 < v_2  \leq  \dots \leq p_{\ell-1} < v_{\ell-1}.
	\end{align*}
	

\begin{figure}[t]
	\centering
	\includegraphics[width=0.65\textwidth]{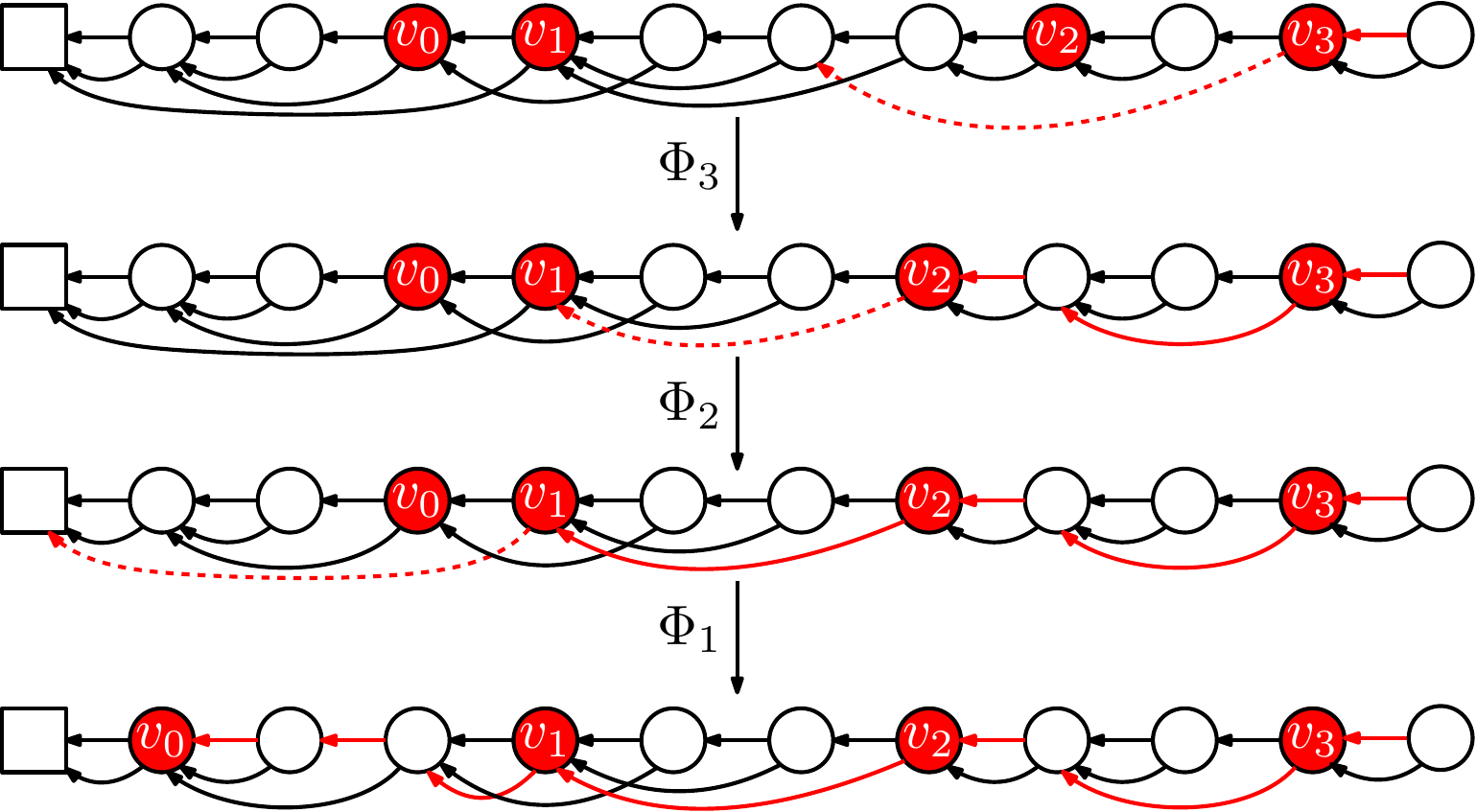}
	\caption{The mapping $\Phi = \Phi_1 \circ \Phi_2 \circ \Phi_3$ from the proof of Proposition~\ref{prop:delln}, mapping a binary chain with $4$ marked vertices to one that corresponds to a valid traversal $P_{v_0}$ shown in red. By Lemma~\ref{lem:Rtrajectories} $P_{v_0}$ is in bijection with a node in $D(C)$ at level $3$, since $P_{v_0}$ contains $3$ pointers.}
	\label{fig:binarychainPhiExample}
\end{figure}
			
	Let $T(\CC_n)$ be the class $\CC_n$ with $\ell$ marked internal nodes.
	We will now define a sequence of transformations \[\Phi_i : T(\CC_n) \to T(\CC_n)\] for $i=1,\dots,\ell-1$, where $\Phi_i$ maps the pointer $p_i$ to a valid one in the sense of~\eqref{eq:validvipi}. 
	An example is shown in Figure~\ref{fig:binarychainPhiExample}. 
	Let $\dot{C} \in T(\CC_n)$ be a compressed chain with $n$ internal nodes of which $\ell$ are marked (denoted by the dot).
	In order to define $\Phi_i$ we distinguish two cases depending on the $i$th pointer $p_i$ of $\dot{C}$.
	\begin{enumerate}
		\item If $p_i \geq v_{i-1}$, then $\Phi_i(\dot{C})=\dot{C}$, i.e., no change happens.
		\item If $p_i \in [v_{k-1},v_{k})$ for $k \leq i-1$ (where we set $v_{-1}=0$ to be the sink), then $\Phi_i(\dot{C})=\dot{C}'$, where $\dot{C}'$ is an identical copy of $\dot{C}$ except for the following changes: 
		In $\dot{C}'$ we denote the pointers and marked nodes by $p'_i$ and $v'_i$, respectively.		
		Let $d = v_{k}-p_i-1$ be the number of internal nodes between the target of $p_i$ and the next larger node $v_k$. 
		Then, we set
	\begin{enumerate}
		\item $p_i' = v_{i-1}$;
		\item $v_{j}' = v_{j}-d$ for $j=k,..., i-1$.
	\end{enumerate}
	In other words, we change the violating pointer and let it point to $v_{i-1}$, 
	and we move all marked nodes between $p_i$ and $v_{i}$ to the left (maintaining their local topology) until $v_{k}'=p_i+1$.
	\end{enumerate}
	Now, for any given $\dot{C}$, we first apply $\Phi_{\ell-1}$ correcting $p_{\ell-1}$, then $\Phi_{\ell-2}$ correcting $p_{\ell-2}$, etc. 
	Therefore, the mapping $\Phi = \Phi_1 \circ \Phi_2 \circ \dots \circ \Phi_{\ell-1}$ maps any traversal to a valid one.
	Note that for $i<j$ it is important to apply $\Phi_j$ before $\Phi_i$, since $\Phi_j$ might change the location of the marked nodes $v_{0},\dots,v_{j-1}$ but does not change $v_{j},\dots,v_{\ell-1}$.
	
	We will now prove that the mapping $\Phi$ is an $\ell!$-to-$1$ mapping between $T(\CC_n)$ and valid traversals.
	An example is shown in Figure~\ref{fig:PhiPreimage}.	
	To see this, observe that the mapping $\Phi_i$ for $p_i$ has the following $i+1$ distinct preimages:
	Either the element itself with a valid pointer $p_i$, or $i$ choices with an invalid pointer $p_i$.
	In the latter, the pointer $p_i$ can be placed in any of the intervals $I_k = [v_{k-1},v_{k})$ for $k=0,\dots,i-1$, by reversing the construction above as follows:
	We first compute $d=p_i-v_{i-1}$, which is the distance between $p_i$ and the next lower marked node $v_i$ (the traversal is valid!).
	Then we choose one of the $i$ intervals $I_k$ and change the pointer $p_i$ to point to the node $v_{k}-1$.
	Finally, we shift all marked nodes $v_{k},\dots,v_{i-1}$ $d$ steps to the right.
	Observe that all these shifts give a different element and do not change the marked nodes $v_{i},\dots,v_{\ell-1}$ nor any other pointers.
	Therefore, computing these preimages in the reversed order of $\Phi$, i.e., $i=1,2,\dots,\ell-1$, only the pointers of the nodes $V_P$ might change, but no other ones. 
	Now, as seen above the pointer $p_i$ has $i+1$ possible locations and hence any given valid traversal has exactly $\ell!$ different preimages.
\end{proof}

\begin{figure}[t]
	\centering
	\includegraphics[width=1\textwidth]{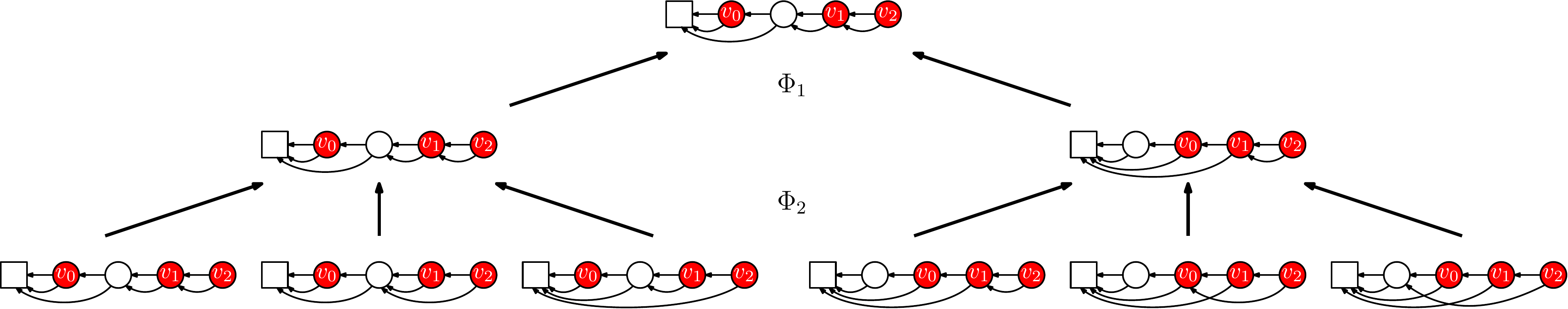}
	\caption{Example of all preimages of the valid traversal on the top with $3$ marked nodes under the mapping $\Phi = \Phi_1 \circ \Phi_2$ from the proof of Proposition~\ref{prop:delln}.
	For $\ell$ marked nodes the mapping $\Phi = \Phi_1 \circ \dots \circ \Phi_{\ell-1}$ is a $\ell!$-to-$1$ mapping.}
	\label{fig:PhiPreimage}
\end{figure}

\begin{remark}[Connection with inversion tables for binary chains]
	Note that each binary chain is in bijection with an inversion table~\cite{Wallner2023Inversion} (and therefore a permutation): 
	Label the nodes starting from the leaf by $0,1,\dots, n$.
	Then the associated inversion table $T = (t_1,t_2,\dots,t_n)$ is defined by $t_i = j$ if the pointer of node $i$ goes to node $j$. 
	By construction, $0 \leq t_i < i$, and each such choice represents a binary chain. 
	Now, the previous proof can also be phrased in terms of inversion tables.
	A valid traversal corresponds to an inversion table with $\ell$ chosen indices $i_0 < i_1 <\dots < i_{\ell-1}$ such that $t_{i_k} \geq i_{k-1}$ for $k\geq 1$.
\end{remark}

From Proposition~\ref{prop:delln} we observe the following equidistribution statistic between permutations and binary chains:
The distribution of the number of nodes on level $\ell$ in all decompressed binary chains $\DD(\CC_n)$ with $n$ internal nodes is equal to the distribution of increasing subsequences of length~$\ell$ in all permutations of~$n$. 
Let $s_{\ell,n}$ be the number of increasing subsequences of length $\ell$ in all permutations of $n$.
It is easy to derive its closed form as follows, by marking each increasing subsequence. 
First, choose $\ell$ out of $n$ positions.
Then choose $\ell$ out of $n$ numbers $\{1,2,\dots,n\}$ and fill the chosen positions with these in increasing order.
Finally, fill the remaining $n-\ell$ positions, with an arbitrary permutations of the remaining $n-\ell$ elements.
Therefore, we have
\begin{align}
	\label{eq:slnisdln}
	s_{\ell,n} = (n-\ell)! \binom{n}{\ell}^2 = \frac{n!}{\ell!}\binom{n}{\ell} = d_{\ell,n}.
\end{align}
We leave it as an open problem to find a bijection between these two quantities, which would then give an alternative and more combinatorial proof of Proposition~\ref{prop:delln}.
Note that there cannot be a bijection $\varphi : S_n \to \CC_n$ such that for all $\pi \in S_n$, the joint statistics of number of increasing subsequences is mapped to the joint distribution of nodes per level in $\DD(\varphi(\pi))$. 
For example, for $n=5$ there are $4$ permutations each having $5$ length $2$ increasing sequences, but no larger ones,
but there are $5$ elements in $\CC_5$ with $5$ nodes on level $2$ and none at larger levels.\footnote{We thank Martin Rubey for finding this example using SageMath.}

\section{Generalized addition chains and $k$-ary chains}
\label{sec:additionchains}

Let $m,n$ be positive integers.
An \emph{addition chain\footnote{In the literature of addition chains it is custom to use $n$ instead of $m$. However, in this paper, $n$ is always used for the size of the compressed tree, which corresponds to the length of the addition chain.} of length $n$ for $m$} is a sequence of numbers $(a_0, a_1, \dots, a_n)$ such that $a_0=1$, $a_n=m$, and for $i>0$ it holds that $a_{i} = a_{j_1} + a_{j_2}$, where $j_1,j_2 <i$; see~\cite[Section~4.6.3]{KnuthArt2}.
Addition chains play an important role in the efficient evaluation of powers~$x^m$, in the sense of minimizing the number of multiplications needed. 
For example, $x^{16}$ can be computed using~$4$ multiplications as follows: $x^2=x \cdot x$, $x^4 = x^2 \cdot x^2$, $x^8 = x^4 \cdot x^4$, and finally $x^{16}=x^8 \cdot x^8$.
The key property for fast multiplication is the minimal length $\ell(m)$ of an addition chain of $m$.
There are many open problems on addition chains.
The most famous is the Scholz--Brauer conjecture:
\begin{align}
	\label{eq:BrauerScholzConjecture}
	\ell(2^m-1) \leq m-1+\ell(m).
\end{align}
We refer to~\cite[Section~C6]{Guy2004Unsolved} and \cite[Section~4.6.3]{KnuthArt2} for more details and conjectures, as well as further references.

We are interested in a special class of addition chains, in which we always choose the immediate predecessor as a summand. 
A \emph{Brauer chain (or star chain) of length $n$ for $m$}, is an addition chain of length $n$ for $m$ such that $a_0=1$, $a_n=m$, and for $i>0$ it holds that $a_{i} = a_{i-1} + a_{j}$, where $j <i$; see~\cite{Guy2004Unsolved,Brauer1939Addition}.
Let $\ell^*(m)$ be the minimal length of a Brauer chain of~$m$. Then, Brauer~\cite{Brauer1939Addition} proved that~\eqref{eq:BrauerScholzConjecture} holds when $\ell$ is replaced by $\ell^*$.

\begin{proposition}
\label{prop:brauerbij}
Brauer chains of length $n$ are in bijection with binary chains with $n$ internal nodes.
In particular, the value $a_n$ is equal to the number of leaves in the decompressed binary tree, and therefore, the number of internal nodes is $a_n-1$.
\end{proposition}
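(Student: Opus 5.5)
Label the spine nodes of a binary chain $C\in\CC_n$ from the sink upward by $0,1,\dots,n$, so node $i$ has its internal edge to $i-1$ and its unique pointer to some node $t_i\in\{0,\dots,i-1\}$, exactly as in the inversion-table remark following Proposition~\ref{prop:delln}. Binary chains with $n$ internal nodes are then in bijection with sequences $(t_1,\dots,t_n)$ satisfying $0\le t_i<i$. This is consistent with $c_n=n!$. A Brauer chain of length $n$ is likewise encoded by the sequence of indices $j_i<i$ with $a_i=a_{i-1}+a_{j_i}$. The intended map is $\Psi: C\mapsto (a_0,\dots,a_n)$ with $a_i=a_{i-1}+a_{t_i}$ and $a_0=1$; equivalently, $j_i=t_i$.

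The main step is to identify $a_i$ with the number of leaves of the decompressed fringe tree $\DD(C_i)$, where $C_i$ is the fringe subgraph rooted at node $i$. I will prove this by induction on $i$. For $i=0$ the sink becomes a single leaf, so $a_0=1$. For $i\ge1$, Definition~\ref{def:decompression} processes node $i$ in postorder after its spine child $i-1$, and replaces its pointer by a copy of the fringe tree of $t_i$. Hence $\DD(C_i)$ is a root whose two subtrees are $\DD(C_{i-1})$ and a copy of $\DD(C_{t_i})$. Its leaf count is therefore $a_{i-1}+a_{t_i}=a_i$. It follows that $a_n$ is the number of leaves of $\DD(C)$. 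Since a binary tree with $L$ leaves has $L-1$ internal nodes, $|C|_{\Tc}=a_n-1$.

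Bijectivity needs care. One must decide whether a Brauer chain means the sequence of values or the sequence together with the choice of indices $j_i$. Under the second reading, the correspondence $t_i=j_i$ is trivially a bijection onto sequences with $0\le j_i<i$. Under the first reading, distinct $t$-sequences can give the same values. For example, if $a_{j}=a_{j'}$ then $j$ and $j'$ give the same sum, and already for $n=2$ the choices $t_2=0$ and $t_2=1$ give $(1,2,3)$ and $(1,2,4)$, which differ, but collisions do occur in general. So I would fix the convention that a Brauer chain records the predecessor choices $j_i$. I would state this explicitly as part of the proof, noting that this is the natural reading and the one under which the count $n!$ holds. Pinning down this convention is the only real obstacle; the leaf-count induction is routine.
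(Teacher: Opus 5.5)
Your leaf-count induction is correct and is essentially the paper's argument. The paper writes $1$ into the sink, labels node $i$ by $a_i=a_{i-1}+a_{t_i}$, and identifies $a_i$ with the number of paths to the sink, i.e.\ the leaves of the decompressed fringe tree.

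\textbf{The gap is in your bijectivity paragraph.} In the paper a Brauer chain \emph{is} the value sequence $(a_0,\dots,a_n)$. You assert that under this reading distinct pointer sequences collide, and you therefore replace the object by the sequence of indices $j_i$. That assertion is false. As a result you prove a bijection only for a relabelled object (trivially the inversion tables), while claiming that the statement as defined fails. Your own $n=2$ example exhibits no collision, and your hypothesis $a_j=a_{j'}$ with $j\ne j'$ can never be met.

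The missing idea is strict monotonicity. Since all $a_j\ge 1$, we have $a_i=a_{i-1}+a_{j_i}\ge a_{i-1}+1$, so $a_0<a_1<\dots<a_n$. Hence the values $a_0,\dots,a_{i-1}$ are pairwise distinct, and $j_i$ is recovered from the value sequence as the unique $j<i$ with $a_j=a_i-a_{i-1}$.

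\begin{itemize}
\item \emph{Injectivity.} Suppose $t\ne t'$ first differ at index $i$. Then $\Psi(t)$ and $\Psi(t')$ agree up to $a_{i-1}$, and $a_i=a_{i-1}+a_{t_i}\ne a_{i-1}+a_{t'_i}$, so $\Psi$ is injective.
\item \emph{Surjectivity.} Every Brauer chain comes with some admissible $j_i<i$ by definition, and setting $t_i=j_i$ gives a preimage.
\end{itemize}

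So there are exactly $n!$ Brauer chains of length $n$ in the paper's sense, and no change of convention is needed; the paper leaves this step implicit, but it holds. The non-uniqueness you were worried about is real only for $k\ge 3$, where e.g.\ permuting the $k-1$ summands gives the same sequence. That is exactly why the paper works with addition graphs rather than $k$-Brauer chains there.
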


\begin{proof} 
Take an arbitrary binary chain and write the number $1$ into the leaf. 
We now associate a number to each node recursively, starting with the one closest to the leaf. 
The number of a node is simply the sum of the two nodes its children are pointing at. 
This associates to the $i$th internal node (counted starting from the leaf) the value $a_i$, which is equal to the number of leaves in the decompressed tree.
Observe that $a_i$ is exactly the number of paths from a node to the leaf; compare Lemma~\ref{lem:Rtrajectories}.
This gives a well-known graphical representation of Brauer chains (and addition chains in general); see, e.g., \cite{Clift2011Addition} or \cite[pp.~480--481]{KnuthArt2}.
\end{proof}

Therefore, the Brauer chains for $m$ are exactly the binary trees with $m$ leaves that are compressible into chains of (compressed) size $n$.
These chain-compressible binary trees with $m$ leaves ($m-1$ internal nodes) are enumerated by~\OEISs{A008927} and 
the initial terms are given by
\begin{align}
	z + z^2 + z^3 + 2z^4 + 3z^5 + 6z^6 + 10z^7 + 20z^8 + 36z^9 + 70z^{10} + 130z^{11} + \dots. \label{eq:chaincompressiblebinary}
\end{align} 
Note that only the first $70$ terms seem to be known. 
Our methods do not seem to be capable of analyzing this sequence. 
We will instead focus on Brauer chains of fixed length $n$ and arbitrary $a_n$.

\smallskip

Our main result Theorem~\ref{theo:decompressed} in the case of binary trees admits then the following interpretation:
the expected value of $a_n$ for an addition chain of length $n$ chosen uniformly at random is equal to
\begin{align*}
		\frac{1}{2\sqrt{e \pi}} \frac{e^{2 \sqrt{n}}}{n^{1/4}} \left(1 + \LandauO\left(\frac{1}{\sqrt{n}}\right)\right).
	\end{align*}
	
Note that also our most general result for $k$-ary chains is connected with addition chains, in which addition is a $k$-ary operation.
Let $k\geq 2$ be a positive integer.
We define a \emph{$k$-Brauer chain (or $k$-star chain) of length $n$ of $m$} as a sequence of numbers $(a_0, a_1, \dots, a_n)$ such that $a_0=1$, $a_n=m$, and for $i>0$ it holds that $a_{i} = a_{i-1} + a_{j_1} + \dots + a_{j_{k-1}}$, where $j_{\ell} <i$ for $\ell= 1,\dots,k-1$.

For $k\geq3$ we need to be careful with the generalization of Proposition~\ref{prop:brauerbij}, since $k$-ary chains are in general in bijection with the specific implementation of the additions that gives a specific addition chain.
This implementation is often called \emph{addition scheme} or \emph{addition graph}~\cite{BrlekCasteranStrandh1991Additionschemes}. 
In applications, the specific implementation is indeed more important than the chain.
Note that one addition chain has in general multiple addition graphs, while each addition graph corresponds to exactly one addition chain.
Only for $k=2$ there is a one-to-one correspondence between addition graphs and addition chains.

In our context, even though addition is commutative, we distinguish the order in which the elements are added to reflect the order of the children in $k$-ary chains. 
Then, the expected value of $a_n$ in a $k$-Brauer graph of length~$n$ chosen uniformly at random is equal to the asymptotics of $\E(X_n)$ in Theorem~\ref{theo:decompressed}.
In order to have a bijection between $k$-Brauer chains and a DAG-class one would need to consider \emph{unordered} DAGs. However, these are more complicated to count and we are missing the analogous results of~\cite{GenitriniGittenbergerKauersWallner2016} on which we build on for unlabeled unordered graphs.

\section{Conclusion and further research directions}
\label{sec:conclusion}

We have analyzed the classical DAG compression algorithm focusing on its decompression properties.
In Theorem~\ref{theo:decompressed} we have shown that the expected decompressed size of a $k$-ary chain of size $n$ chosen uniformly at random is asymptotically equivalent to 
\[
	\kappa \, e^{c \sqrt{n}} n^{-1/4}
\]
for explicit constants $c$ and $\kappa$. 
To our knowledge, this is the first result on the decompression distribution of DAG compression. 

The key concept of this paper is that of a decompressed tree size given in Definition~\ref{def:decompressedtreesize}.
The underlying idea is that the compressed and decompressed world are strongly linked.
Thereby, statistics on trees can be used to define statistics on DAGs, and vice versa.
While the size is probably the most natural statistic, many other choices like the width or height, are possible and remain to be addressed.
Moreover, we generalized in~\eqref{eq:cgf} the important notion of exponential generating functions, to $d$-exponential generating functions, rescaling the $n$th coefficient by $(n!)^d$. 
This allowed us to derive a symbolic specification for decompressed $k$-ary chains using generating functions, which we could analyze with the tools from analytic combinatorics.

\begin{figure}[t]
	\centering
	\includegraphics[width=1\textwidth]{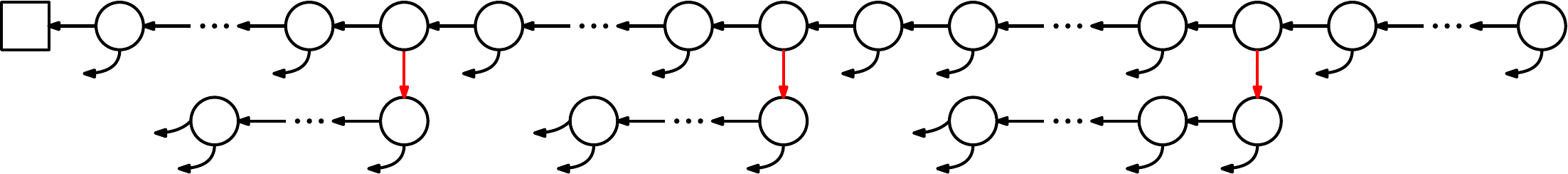}
	\caption{The spine with dangling pointers of a relaxed binary tree of right height at most $1$. The difference are the additional possible right-edges in the spine colored in red. The number of such DAGs with $n$ internal nodes is equal to the odd double factorials $(2n-1) \cdot (2n-3) \cdots 3 \cdot 1$.}
	\label{fig:rightheight1}
\end{figure}

\subsection{Open questions and further research}
\begin{itemize}
	\item 
	Can we further analyze the (limit) distribution of decompressed $k$-ary chains? 
	We have derived the asymptotic mean, however, we have no information on higher moments, like the variance.
	A first step is to check whether the derived expected value is indeed ``typical'' by, e.g., simulations.

	\item
	What is the expected decompressed size of other DAG classes?
	Interesting classes are chains with fixed out-degree sets, such as binary-ternary chains, as well as rooted DAGs with other constraints on the underlying spanning tree.
	A suitable first choice are relaxed and compacted trees of bounded right height~\cite{GenitriniGittenbergerKauersWallner2016}; see Figure~\ref{fig:rightheight1}.
It was shown that this class is enumerated by odd double factorials, which was later proved in~\cite{Wallner2017Bijection} bijectively. 

	\item
	What can we say about the decompressed size of unrestricted compacted trees?
	This is certainly the most interesting and challenging question, as we do not have a characterization in terms of generating functions.
	We will need different methods, e.g., building on the known bivariate recurrence from~\cite{ElveyPriceFangWallner2019Compacted}.

	\item
	On a more combinatorial level, 
	in~\eqref{eq:slnisdln} we have proved an equidistribution between the number of nodes on level $\ell$ in all decompressed binary chains $\DD(\CC_n)$ with $n$ internal nodes and the number of increasing subsequences of length~$\ell$ in all permutations of~$n$. It remains to find a combinatorial proof, e.g., in the form of a bijection on inversion tables.
	
	\item
	What is the number of binary trees with $n$ nodes that are compressible into chains? Only the first terms are known, yet no recurrence; see~\eqref{eq:chaincompressiblebinary} and~\OEISs{A008927}.

	\item
	Finally, now that the counting problem is solved, we can analyze parameters, such as the typical shape of large decompressed trees.
	For example, let $L_n$ be the random variable associated with the level of a random node in all decompressed binary chains $\DD(\CC_n)$ with $n$ internal nodes:
	\begin{align*}
		\PR(L_n = k) = \frac{[z^n q^k] D(z,q)}{[z^n]D(z,1)}.
	\end{align*}
	Then, by standard methods of analytic combinatorics one can show that $L_n$ satisfies a Gaussian limit law with mean and variance of order $\sqrt{n}$:
	\begin{align*}
		&&
		\E(L_n) &= \sqrt{n} + \LandauO(1) & 
		& \text{and} &
		\V(L_n) &= \frac{\sqrt{n}}{2} + \LandauO(1).
		&&
	\end{align*}
	This follows directly from \cite[Example~IX.33]{flaj09} and the therein discussed perturbation of saddle-point asymptotics, and, with more work, one has access to a full asymptotic expansion.
	The same method is applicable to $k$-ary chains and other parameters.

\end{itemize}

\section*{Acknowledgement}

This research was partially supported by the Austrian Science Fund (FWF) P~34142 and AST1535024.

\section*{Previous version}

This work extends the extended abstract~\cite{Wallner2025Decompressed} that appeared in and was presented at the European Conference on Combinatorics, Graph Theory and Applications 2025 (Eurocomb'25) in Budapest, Hungary. 

\addcontentsline{toc}{section}{References}
\bibliographystyle{mybiburl}
\bibliography{Bibliography}
\label{sec:biblio}

\end{document}